\newcommand{\cl}{\mathrm{cl}}
\newcommand{\tp}{\mathrm{tp}}
\newcommand{\R}{\mathbb{R}}
\newcommand{\EGen}[2]{\mathrm{Gen}_{#1}(#2^{ext})}
\newcommand{\Sext}[2]{S_{ext,#1}(#2)}
\newcommand{\GC}{\mathfrak{C}}
\newtheorem{theorem}{Theorem}
\numberwithin{theorem}{section}
\newtheorem{lemma}[theorem]{Lemma}
\newtheorem{fact}[theorem]{Fact}
\newtheorem{proposition}[theorem]{Proposition}
\newtheorem{remark}[theorem]{Remark}
\newtheorem{conjecture}[theorem]{Conjecture}
\newtheorem{question}[theorem]{Question}
\newtheorem{corollary}[theorem]{Corollary}
\newtheorem{definition}[theorem]{Definition}
\theoremstyle{definition}
\title{The Ellis group conjecture and variants of definable amenability}
\author{Grzegorz Jagiella
	}
\thanks{This research was supported by the National Science Centre, Poland grant no. 2014/13/N/ST1/02521}
\thanks{Partially supported by ValCoMo (ANR-13-BS01-0006)}
\thanks{This research was supported by the Israel-US Binational Science Foundation}
\subjclass[2010]{03C99, 54H20 (primary), and 03C64 (secondary)} 
\keywords{Definable topological dynamics, definable amenability, o-minimality}
\begin{document}
\maketitle
\begin{abstract}
We consider definable topological dynamics for $NIP$ groups admitting certain decompositions in terms of specific classes of definably amenable groups. For such a group, we find a description of the Ellis group of its universal definable flow. This description shows that the Ellis group is of bounded size. Under additional assumptions, it is shown to be independent of the model, proving a conjecture proposed by Newelski. Finally we apply the results to new classes of groups definable in o-minimal structures, generalizing all of the previous results for this setting.
\end{abstract}

In this paper we work within the framework of definable topological dynamics. The reader is referred to \cite{New1} and \cite{New2} as the seminal papers. Familiarity with the subject will be assumed throughout the paper, but we will recall the necessary notions in the preliminaries. The main motivation for this research comes from an open problem in definable topological dynamics regarding the model-theoretic aspects of the Ellis group of a definable flow. Given a model $M$ and an $M$-definable group $G$, we consider the category of definable $G$-flows over $M$. These flows are $G(M)$-flows in the sense of classic topological dynamics. The category contains a universal object $\Sext{G}{M}$, the space of external types in $G$ over $M$. In the studies of this flow, a conection has been found relating its Ellis group to the model-theoretic connected components of $G$.

In \cite{New1}, Newelski conjectured that (at least under some ``tame'' assumptions), the Ellis group of the universal flow of a group $G$ is isomorphic to the quotient $G/G^{00}$, and that in particular it does not depend on the model. This was proved to be the case in stable theories \cite{New3} and for definably compact groups definable in o-minimal structures \cite{New2}. This result was later extended to definably amenable groups definable in o-minimal structures \cite{PY} and finally to all definably amenable groups in $NIP$ theories \cite{CP}. Partial results also hold in the general case. It was proven that in general the Ellis group factors through $G/G^{00}$ whenever $G^{00}$ exists \cite{New2}. Indeed, this result holds also with $G^{000}$ in place of $G^{00}$. On the other hand, there are examples of groups definable in a relatively tame o-minimal setup where the conjecture fails.

A related, weaker conjecture states that the Ellis group does not depend on the choice of the model. More precisely, given an $M$-definable group $G$ and $N \succ M$, one can consider the $G(M)$-flow $\Sext{G}{M}$ as well as the $G(N)$-flow $\Sext{G}{N}$. The conjecture asserts that the Ellis groups calculated for each of the flows are isomorphic, and that the witnessing isomorphism can be constructed in some definable way. This is the case whenever the original conjecture holds as the quotient $G/G^{00}$ depends only on $G$.

A large part of the study of definable topological dynamics has been conducted in the o-minimal setup. A study of $SL(2,\R)$ in \cite{GPP} provided a counterexample to the original Ellis group conjecture. In \cite{Jag}, a wide range of counterexamples have been produced by calculating the Ellis groups for the flows of groups definable in o-minimal expansions of the reals admitting definable compact-torsion-free decomposition. These results have been generalized in \cite{YL} to allow the calculation of the Ellis groups over larger models, establishing the isomorphism with groups calculated over the reals. A tangent case of definably amenable groups definable in o-minimal expansions of arbitrary real closed fields have been solved in \cite{PY}. The methods used to determine the Ellis groups have been progressively less specific to the o-minimal setting. 

In this paper, we replace the notions specific to the o-minimal setting with more robust, model-theoretic ones, using the research done by Chernikov and Simon on definably amenable $NIP$ groups. We provide a way of computing the Ellis groups for $NIP$ groups that either contain a definable, definably extremely amenable normal subgroup, or admit a definable decomposition into an $fsg$ and a definably extremely amenable subgroup. We then apply our results to the o-minimal case, generalizing all the previously obtained results. We finally discuss other generalizations or applications. Our main results for the o-minimal case is the following:

\begin{theorem}
	\label{TheoremOMin}
	Let $G$ be a definably connected group definable in an o-minimal expansion of a real closed field. Then over any model $M$ the Ellis group of the flow $\Sext{G}{M}$ is abstractly isomorphic to a subgroup of a compact Lie group.
\end{theorem}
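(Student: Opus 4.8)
The plan is to prove this theorem as an application of the general machinery the paper develops for NIP groups admitting the two kinds of decompositions mentioned in the abstract and introduction. The first and most important step is a structural one: I would invoke the structure theory of groups definable in o-minimal expansions of real closed fields to produce, for a definably connected $G$, a definable normal subgroup decomposition that matches one of the two hypotheses the general results require. The natural candidate is a short exact sequence in which $G$ has a definable normal subgroup $H$ that is definably extremely amenable (playing the role of the torsion-free / solvable-radical-type part), with the quotient $G/H$ being definably compact, hence $fsg$. This is where I expect to lean on the known o-minimal structure theory (the existence of a maximal definable torsion-free normal subgroup and the definable compactness of the quotient), translated into the model-theoretic language of $fsg$ and definable extreme amenability that the paper has set up.

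Once the decomposition is in place, the strategy is to apply the paper's main computational result for such groups to obtain a concrete description of the Ellis group of $\Sext{G}{M}$ over an arbitrary model $M$. I would expect that description to exhibit the Ellis group as built from two pieces: a contribution from the $fsg$ (definably compact) part, whose Ellis group is $G/G^{00}$ by the Chernikov--Simon / definably amenable NIP results, and a contribution from the definably extremely amenable part, whose universal flow has a trivial or trivializing Ellis group so that it contributes only through its action. The key structural output should be that the Ellis group embeds into, or is a subgroup of, the group $(G/H)/(G/H)^{00}$ associated to the definably compact quotient.

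The final step is to identify that target group with (a subgroup of) a compact Lie group. Here I would use the fact that for a definably compact group $K$ definable in an o-minimal expansion of a real closed field, the quotient $K/K^{00}$ is a compact Lie group --- this is the o-minimal analogue of Pillay's conjecture, established by Hrushovski--Peterzil--Pillay. Composing this with the embedding from the previous step yields an abstract isomorphism of the Ellis group onto a subgroup of a compact Lie group, which is exactly the assertion.

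The hard part, I expect, will not be the final identification with a compact Lie group (that is essentially a citation) but rather the first step: verifying that an arbitrary definably connected $G$ genuinely falls under one of the two decomposition hypotheses, and in particular confirming that the torsion-free-type normal subgroup produced by o-minimal structure theory is definably extremely amenable in the precise model-theoretic sense used by the general theorems, and that the computation of the Ellis group via the decomposition is valid uniformly over \emph{every} model $M$ rather than only over the real closed field one started with. Managing the base-change across models $N \succ M$, and ensuring the description of the Ellis group is stable under this change, is the delicate point that the general results are designed to handle, so the proof will amount to checking that $G$ satisfies their hypotheses and then quoting them.
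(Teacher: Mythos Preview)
Your proposal has a genuine structural gap. You assume one can find a definable normal, definably extremely amenable subgroup $H \lhd G$ such that the quotient $G/H$ is definably compact (hence $fsg$). This is false in general: for a definably connected semisimple group such as $SL_2(R)$, the only definable torsion-free normal subgroup is trivial, yet $SL_2(R)$ is not definably compact. More generally, Conversano's structure theorem (Proposition~\ref{PropDecomp}) only guarantees that, after quotienting by the maximal torsion-free normal subgroup $\mathcal{A}(G)$, the quotient $G/\mathcal{A}(G)$ admits a \emph{compact--torsion-free decomposition} $KH$; it does \emph{not} say that $G/\mathcal{A}(G)$ is itself definably compact, and typically the torsion-free factor $H$ in that decomposition is not normal.

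Consequently the paper's proof uses both pieces of machinery, not just the normal-subgroup case you outlined. First, Proposition~\ref{PropDEAIsom} (Section~\ref{SectionDEA}) reduces the Ellis group of $G$ to that of $G/\mathcal{A}(G)$ via the normal definably extremely amenable quotient. Second, and this is the step you are missing, Theorem~\ref{TheoremEllis} handles $G/\mathcal{A}(G)$ via its good (Zappa--Sz\'ep) decomposition $KH$ with $K$ $fsg$ and $H$ definably extremely amenable but \emph{not} normal, embedding the Ellis group into $K/K^{00}$. Only then does one invoke Pillay's theorem on $K/K^{00}$. The analysis in Section~4 of the non-normal decomposition---tracking the induced action of $H$ on $K$ at the level of types and showing the Ellis group sits inside $K/K^{00}$---is precisely the hard content your plan bypasses, and without it the argument does not go through for groups like $SL_2(R)$.
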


In some cases we also note definability of this isomorphism, establishing the weaker Ellis group conjecture there.

The paper is divided into six sections. In the first section we recall the usual notions of definable topological dynamics and some general model theory, and cite some of the more important results that we use. In the second section we discuss the notion of definable amenability and its specific cases. In Section 3, we prove results regarding the Ellis group of the universal definable flow of a definable group in a $NIP$ theory that has a normal, definable, definably extremely amenable subgroup. In Section 4, again assuming $NIP$, we consider the dynamics of a group admitting a definable decomposition into an $fsg$ and a definably extremely amenable subgroups. In Section 5, we apply the results to the case of groups definable in o-minimal setting. In Section 6, we discuss generalizations and further applications.

\section{Preliminaries}
Throughout the paper, we use standard model-theoretic notations. Working over a fixed model of an ambient theory, we will write $\GC$ to denote a sufficiently saturated and homogeneous elementary extension. We assume the reader's familiarity with the basics of model theory, including heirs, coheirs and the notion of definability of types. By ``definable'' we always mean ``definable with parameters''. In the following subsections we discuss the notion of $NIP$ groups, recall the basic notions of both classic and definable topological dynamics, and the notion of definably amenable groups and their specific subclasses.

\subsection{$NIP$ theories and external definability}
In our investigations, we will be dealing with the notion of externally definable subsets and external types. Let $M$ be a structure. Recall that an externally definable subset of $M^n$ is a trace in $M^n$ of a formula with parameters from an elementary extension $N \succ M$. The set of all externally definable subsets of $M^n$ forms a boolean algebra $Def_{ext,n}(M)$. The set $S_{ext,n}(M) = S(Def_{ext,n}(M))$ of ultrafilters on $Def_{ext,n}(M)$ carries the Stone topology, and we call its elements external types. If $X$ is an $M$-definable subset, we write $\Sext{X}{M}$ for the closed subset of elements of $\Sext{n}{M}$ that contain $X$.

Working with external types in arbitrary theories is rather difficult. An assumption on the ambient first-order theory makes their structure more easily understood. Recall the following definition:
\begin{definition}
	A complete theory $T$ has $NIP$ if it contains no formula $\phi(\bar{x},\bar{y})$ with the following property: for every model $M \models T$, for each $n \in \omega$ there are tuples $b_0, \ldots, b_{n-1} \in M$ such that for each subset $X$ of $\{0, \ldots, n-1\}$ there is a tuple $a \in M$ such that $M \models \phi(a,b_i) \iff i \in X$.
\end{definition}
Now assume that $T$ is a $NIP$ theory in the language $\mathcal{L}$ and $M$ its model. For each externally definable set $X \in Def_{ext}(M)$ (for all $n$), let $D_X$ be a new predicate interpreted in $M$ such that $D_X(M) = X$. Let $\mathcal{L}_{ext,M} = \mathcal{L} \cup \{D_X : X \in Def_{ext}(M)\}$ be a new language and let $M^{ext}$ be the structure with the universe $M$ considered in the language $\mathcal{L}_{ext,M}$. Shelah proved in \cite{She} that:
\begin{proposition}
	Assume that $T$ has $NIP$ and $M \models T$. Then the first order $\mathcal{L}_{ext,M}$-structure $M^{ext}$ has elimination of quantifiers and all types over $M^{ext}$ are definable.
\end{proposition}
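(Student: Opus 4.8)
The plan is to establish the two assertions together, with quantifier elimination as the main step and definability of types as a consequence. First I would reduce quantifier elimination to a closure property of the family of externally definable sets. The quantifier-free $\mathcal{L}_{ext,M}$-formulas, evaluated in $M^{ext}$, define precisely the Boolean combinations of the predicates $D_X$, and since $\Dext{n}{M}$ is already a Boolean algebra these are exactly the externally definable subsets of $M^n$. By the usual syntactic criterion it therefore suffices to show that an existentially quantified quantifier-free formula again defines an externally definable set; equivalently, that $\bigcup_n \Dext{n}{M}$ is closed under the coordinate projections $\pi\colon M^{n+1}\to M^n$, where the quantified variable ranges over $M$. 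Concretely, given $X=\phi(M^{n+1};b)$ with $\phi\in\mathcal{L}$ and $b$ in an extension $N\succ M$, I must show that $Y=\{a\in M^n:\exists c\in M,\ \phi(a,c;b)\}$ again lies in $\Dext{n}{M}$.

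The engine for this step is the theory of honest definitions for $NIP$ formulas: for $\phi(x;b)$ over the small model $M$ there are a formula $\psi(x;z)\in\mathcal{L}$ and a tuple $d$ in $\GC$ (taken sufficiently saturated) with $\psi(M;d)=\phi(M;b)$ and, crucially, the inner-approximation property $\psi(\GC;d)\subseteq\phi(\GC;b)$. I would replace $X$ by such an honest inner approximation and project the $\mathcal{L}$-formula $\psi$: the point is that $\exists z\,\psi(x,z;d)$ is an $\mathcal{L}$-formula whose trace on $M^n$ is externally definable and is squeezed between $Y$ and the projection computed inside $\GC$. The genuinely delicate issue, and the place where I expect the main obstacle, is that the existential witness produced in $\GC$ need not lie in $M$, so a single honest definition does not literally compute $Y$; pinning $Y$ down exactly requires combining the inner approximations of $\phi$ and of $\neg\phi$ and invoking saturation together with the finiteness built into $NIP$, namely bounded alternation and finite VC dimension. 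That finiteness is exactly what makes honest definitions exist in the first place, via the $(p,q)$-theorem, and establishing it is the hard combinatorial core of the proof; the hypothesis $NIP$ enters nowhere else.

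Finally, I would deduce definability of types from quantifier elimination. Let $q\in \Sext{n}{M}$ be a complete type over $M^{ext}$ and let $\theta(x,y)$ be an $\mathcal{L}_{ext,M}$-formula; by quantifier elimination I may assume $\theta$ is quantifier-free, hence a Boolean combination of predicates $D_X$ and $\mathcal{L}$-atomic formulas. Taking a realization $a\models q$ in an elementary extension, the set $\{b\in M:\theta(x,b)\in q\}$ is the trace on $M$ of the formula $\theta(a,y)$ carrying the single parameter $a$, hence is externally definable and so is defined by a quantifier-free $\mathcal{L}_{ext,M}$-formula over $M$. Therefore every type over $M^{ext}$ is definable, completing the proof once the projection step is in hand.
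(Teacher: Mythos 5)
The paper itself gives no proof of this proposition at all --- it quotes it as Shelah's theorem, citing \cite{She} --- so the only question is whether your argument stands on its own. Your plan follows what is now the standard modern route to Shelah's theorem (honest definitions, due to Chernikov and Simon), and your reduction of quantifier elimination to closure of $\bigcup_n \Dext{n}{M}$ under coordinate projection is correct. The genuine gap is that your statement of the key lemma is vacuous and therefore cannot drive the projection step. As you state it --- there exist $\psi(x;z)\in\mathcal{L}$ and $d$ in $\GC$ with $\psi(M;d)=\phi(M;b)$ and $\psi(\GC;d)\subseteq\phi(\GC;b)$ --- the lemma holds in \emph{every} theory, trivially, by taking $\psi:=\phi$ and $d:=b$. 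A statement that is true without any assumption cannot by itself imply quantifier elimination of $M^{ext}$, which is known to fail in general without $NIP$; and the failure of the recipe ``project the honest definition'' is concrete. Take $T$ the theory of dense linear orders, $M=(\mathbb{Q},<)$, and $b=(b_1,b_2)$ a pair in $\GC$ with $b_1<b_2$ and no rational between them, and $\phi(x,y;b):= b_1<y<b_2$. Then $\psi:=\phi$, $d:=b$ satisfies both of your conditions, $Y=\emptyset$, yet the trace on $M$ of $\exists y\,\psi(x,y;d)$ is all of $\mathbb{Q}$: the formula computes the projection in $\GC$, not in $M$. You correctly identify this symptom (witnesses land in $\GC$ rather than $M$), but the proposed repair by combining inner approximations of $\phi$ and $\neg\phi$ cannot close the gap, since $\psi=\phi$ and $\chi=\neg\phi$ also satisfy those hypotheses and still only detect witnesses in $\GC$.

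What the actual honest-definition theorem says, and what your version omits, is this: add a predicate $P$ for $M$, pass to a sufficiently saturated elementary extension $(\GC',M')$ of the pair $(\GC,M)$; then ($NIP$ being used exactly here) there are $\psi(x;z)\in\mathcal{L}$ and a tuple $d$ of elements of $M'$ with $\phi(M;b)\subseteq\psi(M';d)\subseteq\phi(M';b)$, both inclusions referring to points of $M'$. The two features missing from your version --- the parameter $d$ lies in the enlarged copy $M'$ of $M$, and the inner approximation is relative to $M'$ rather than to all of $\GC'$ --- are precisely what make the witness transfer work: if $a\in M^n$ and some $c'\in M'$ satisfies $\psi(a,c';d)$, then $(a,c')\in\phi(M';b)$, so $(\GC',M')\models\exists y\,(P(y)\wedge\phi(a,y;b))$, and elementarity of the pair extension produces a witness in $M$ itself; conversely every witness in $M$ satisfies $\psi$. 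Hence $Y$ is exactly the trace on $M^n$ of the $\mathcal{L}$-formula $\exists y\,\psi(x,y;d)$ evaluated in $M'\succ M$, so $Y\in\Dext{n}{M}$, with no appeal to $\neg\phi$ at all. Finally, your deduction of definability of types has a separate, smaller leak: for $a$ in an $\mathcal{L}_{ext,M}$-elementary extension of $M^{ext}$, the trace $\theta(a,M)$ is externally definable \emph{over $M^{ext}$ in the expanded language}, but to replace it by a predicate $D_Z$ you need it to lie in $\Dext{m}{M}$, i.e.\ to be the trace of an $\mathcal{L}$-formula at parameters in an $\mathcal{L}$-elementary extension of $M$. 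This is true, and needs no $NIP$, but it requires an argument --- for instance, such a trace is an ultrafilter limit of the fibers $D_X(a',M)$ with $a'\in M$, and $\Dext{m}{M}$ is closed under such limits --- and is not an immediate consequence of quantifier elimination as you suggest.
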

As a consequence of definability of types over $M^{ext}$, every type $p \in S(M^{ext})$ has a unique heir and a unique coheir over any set of parameters $A \supset M$. We will denote the unique heir of $p$ over $A$ by $p|A$. For brevity, if $\bar{a}$ is a finite tuple, we will write $p|\bar{a}$ instead of $p|M\bar{a}$. We will also employ the following notation, used for example in \cite{HP}, to easily express the heir and coheir relationships of elements:

\begin{definition}
	Let $(p_0, \ldots, p_{n-1})$ be a sequence of definable types. We write
	$$(a_0, \ldots, a_{n-1}) \models p_0 \otimes p_1 \otimes \ldots \otimes p_{n-1}$$
	to denote that for each $i < n$, $a_i \models p_i|a_{<i}$.
\end{definition}

By the elimination of quantifiers, the standard space of types $S_n(M^{ext})$ naturally identifies with the space of quantifier-free types $S_{qf,n}(M^{ext})$. Thus they can both be identified with the space $\Sext{n}{M}$ of external types in the original language.

In the later subsections, and in the paper in general, we will often start with a model $M$ of a $NIP$ theory and consider objects definable in the original language. Then in order to consider external types, we will pass to $M^{ext}$ where they can be identified with the standard types in $S(M^{ext})$. Since the universe of $M$ and $M^{ext}$ is the same, we will make no distinction between $X(M)$ and $X(M^{ext})$ for an $\mathcal{L}$-definable $X$.

Finally, we will use the following notation. Assume that $p$ is a global type finitely satisfiable in some model $M$. Then we write
$$p^M = \{\phi(M) : \phi \in p\}.$$
This is a external type in $S_{ext}(M)$.

\subsection{Definable topological dynamics}
First we briefly recall some basic notions and results of classic topological dynamics. Let $G$ be a group. A $G$-flow is a (left) action of $G$ on a compact, Hausdorff topological space $X$ by homeomorphisms. A $G$-flow is called point-transitive if it contains a dense $G$-orbit. A subflow $Y \subset X$ is a nonempty, closed $G$-invariant subset of $X$. A subflow is called minimal if it contains no proper subflows. With any $g \in G$ we can associate the corresponding function $\pi_g : X \rightarrow X$. Consider the space $X^X$ with pointwise convergence topology. The (compact) set $\cl(\{\pi_g : g \in G\}) \subset X^X$ is a point-transitive $G$-flow. Equipped with the function composition operation $*$, it is a semigroup. It is called the Ellis semigroup of the flow $X$, denoted $E(X)$. This semigroup operation is continuous on the first coordinate. We have that $E(E(X))$ is naturally isomorphic to $E(X)$.

Let $I$ be a minimal subflow of $E(X)$. Then $I$ is a minimal ideal of the semigroup $(E(X),*)$. Likewise, any minimal ideal of $(E(X),*)$ is a minimal subflow. Let $J(I)$ denote the set of idempotent elements of $I$. Then by general theory of compact semigroups,
$$I = \bigcup_{u \in J(I)} u * I.$$
Each $u * I$ is a group that we will call an ideal subgroup. The isomorphism class of an ideal subgroup does not depend on $u$ and the choice of the minimal ideal $I$. We call this isomorphism class the Ellis group of the flow $X$.

Turning now to the definable setting, we let $M$ be an arbitrary first-order structure and let $G = (G,\cdot)$ be an $M$-definable group. The space $\Sext{G}{M}$ is naturally acted upon by $G(M)$ by left translations and carries the Stone topology that makes it a compact, Hausdorff topological space. The set of all principal ultrafilters in $\Sext{G}{M}$ forms a dense orbit, making this Stone space a point-transitive $G(M)$-flow in the sense of classic topological dynamics. It is the universal definable flow of $G$ over $M$ in the sense of \cite{New1}. By \cite{New1}, this flow is naturally isomorphic to its own Ellis semigroup. As such, $\Sext{G}{M}$ is equipped with a semigroup operation.

Now assume that we work with a $NIP$ theory. We identify $\Sext{G}{M}$ with $S_G(M^{ext})$. Due to definability of types, the semigroup operation on $S_G(M^{ext})$ has the following, explicit definition. For $p,q \in S_G(M^{ext})$,
$$p * q = \tp(a \cdot b/M^{ext}),$$
where $a \models p$ and $b \models q|a$. Equivalently, $b \models q$ and $a$ satisfies the unique finitely satisfiable extension of $p$ over $b$; or simply $(a,b) \models p \otimes q$. In general, whenever $(a_0,\ldots,a_n) \models p_0 \otimes \ldots \otimes p_n,$ we have $a_0 \cdot \ldots \cdot a_n \models p_0 * \ldots * p_n$.

\subsection{Ellis group conjectures}
Let $G$ be an $M$-definable group. Recall that the model-theoretic connected component $G^{00}$ is the smallest type-definable subgroup of $G$ of bounded index. It is a normal subgroup of $G$. It is well-known that $G^{00}$ exists assuming $NIP$. Note that this is not true in general. Furthermore, by \cite{CPS}
\begin{fact}
	Let $T$ has $NIP$. Then $G^{00}$ calculated for $\mathcal{L}_{ext,M}$ exists and equals to $G^{00}$ calculated in $\mathcal{L}$.
\end{fact}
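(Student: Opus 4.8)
The plan is to fix two monster models and compare the two connected components inside a common structure. Let $\GC \succ M$ be a monster model of $T$ in $\mathcal{L}$, and let $\mathfrak{C}^{ext} \succ M^{ext}$ be a monster model of $T^{ext} := \mathrm{Th}_{\mathcal{L}_{ext,M}}(M^{ext})$. Since $\mathcal{L} \subseteq \mathcal{L}_{ext,M}$, the $\mathcal{L}$-reduct of $\mathfrak{C}^{ext}$ is an $\mathcal{L}$-elementary extension of $M$ and a model of $T$, so both components may be regarded as type-definable subgroups of $G$ over $M$ and compared after interpreting them in $\mathfrak{C}^{ext}$. Existence of $G^{00}$ on the $\mathcal{L}$-side is the standard $NIP$ fact already invoked; for the $\mathcal{L}_{ext,M}$-side I would first record that the Shelah expansion preserves $NIP$, so that $T^{ext}$ is $NIP$ and the component computed in $\mathcal{L}_{ext,M}$ likewise exists. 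Write $G^{00}$ and $G^{00}_{ext}$ for the two components.

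First I would prove the inclusion $G^{00}_{ext} \subseteq G^{00}$. The component $G^{00}$ is cut out by an $\mathcal{L}$-partial type $\pi(x)$ over $M$; since $\pi$ is also an $\mathcal{L}_{ext,M}$-partial type, it defines a subgroup of $G(\mathfrak{C}^{ext})$. That $\pi(\mathfrak{C}^{ext})$ is again a subgroup and still of bounded index transfers from $\GC$, as both are $\mathcal{L}$-invariants of $\pi$ over $M$ and the index of a bounded type-definable subgroup does not grow with further saturation. By minimality of $G^{00}_{ext}$ among bounded-index $\mathcal{L}_{ext,M}$-type-definable subgroups, $G^{00}_{ext} \subseteq \pi(\mathfrak{C}^{ext})$, the interpretation of $G^{00}$ in $\mathfrak{C}^{ext}$.

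The substance is the reverse inclusion $G^{00} \subseteq G^{00}_{ext}$, and here the plan is to show that $G^{00}_{ext}$ is in fact $\mathcal{L}$-type-definable; minimality of $G^{00}$ on the $\mathcal{L}$-side then forces $G^{00} \subseteq G^{00}_{ext}$ and closes the argument. By quantifier elimination in $M^{ext}$, the component $G^{00}_{ext}$ is an intersection $\bigcap_i D_{X_i}$ of externally definable sets, each $X_i = \phi_i(M, b_i)$ the trace of an $\mathcal{L}$-formula with parameter $b_i$ from an extension. The engine for removing the external parameters is the existence of honest definitions for externally definable sets in $NIP$ theories: each $D_{X_i}$ is approximated from inside by an honestly $\mathcal{L}$-definable set, uniformly enough that, together with a Baldwin--Saxl style stabilization of the bounded intersection of uniformly definable subgroups, one can replace the defining conditions of $G^{00}_{ext}$ by $\mathcal{L}$-conditions over a small parameter set from $\GC$. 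This exhibits $G^{00}_{ext}$ as an $\mathcal{L}$-type-definable subgroup of bounded index, as required.

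I expect the honest-definition reflection to be the main obstacle. The delicate point is that honest definitions are naturally statements about traces on the model $M$, whereas $G^{00}_{ext}$ is a subgroup of the monster $\mathfrak{C}^{ext}$; bridging these requires reflecting the bounded-index and subgroup data down to $M$, applying the approximation there, and then pulling the resulting $\mathcal{L}$-partial type back up, all while checking that ``bounded index'' is read off identically in the two languages. Verifying this compatibility, rather than the formal inclusions, is where the real work lies.
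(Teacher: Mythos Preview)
The paper does not prove this statement at all: it is stated as a \emph{Fact} and attributed to \cite{CPS} (Chernikov--Pillay--Simon, \textit{External definability and groups in NIP theories}), with no argument given in the present paper. So there is nothing in the paper to compare your proposal against directly.

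That said, your outline is essentially the argument of \cite{CPS}. The easy inclusion $G^{00}_{ext}\subseteq G^{00}$ is exactly as you say. For the substantive direction, \cite{CPS} shows that any $\mathcal{L}_{ext,M}$-type-definable subgroup of bounded index is already $\mathcal{L}$-type-definable, and the tool is precisely honest definitions for externally definable sets in $NIP$ theories, combined with a Baldwin--Saxl style argument to control the intersection. You have correctly identified both the strategy and the genuine technical point (passing between traces on $M$ and the type-definable subgroup in the monster). Your sketch is accurate as a plan, though of course the details you flag as ``the real work'' are exactly what \cite{CPS} carries out; if you were asked to supply a self-contained proof you would need to execute that step, not merely name it.
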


Newelski conjectured \cite{New1} that under some relatively ``tame'' assumptions (generally understood to include $NIP$), the following holds:
\begin{conjecture}
	The Ellis group of $\Sext{G}{M}$ is isomorphic to $G/G^{00}$.
\end{conjecture}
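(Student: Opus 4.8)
The plan is first to observe that, stated for an arbitrary $NIP$ group $G$, this assertion is false: the analysis of $SL(2,\R)$ in \cite{GPP} and the examples of \cite{Jag} exhibit $NIP$ (indeed o-minimal) groups whose Ellis group is strictly larger than $G/G^{00}$. I would therefore prove it under the hypothesis where it does survive, namely definable amenability, following the line of \cite{CP}. The statement splits into a surjection, which is soft, and an injection, which carries all the weight.

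For the surjection I would use the universal property together with the structure of the bounded quotient. Since $T$ has $NIP$, $G^{00}$ exists, and the logic topology makes $G/G^{00}$ a compact Hausdorff group on which $G(M)$ acts by left translation, a definable $G$-flow. By universality of $\Sext{G}{M}$ there is a flow epimorphism $\pi\colon\Sext{G}{M}\to G/G^{00}$ extending the quotient map $G\to G/G^{00}$. A compact group acting on itself by translation is its own Ellis group, so $\pi$ maps any minimal ideal $I$ of $\Sext{G}{M}$ onto $G/G^{00}$ and restricts to a surjective homomorphism $u*I\to G/G^{00}$ on any ideal subgroup. This is the factoring result of \cite{New2}, and I would record it first, as it uses only compactness of $G/G^{00}$ and the universal property of the flow.

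The substance is injectivity of $\pi$ on $u*I$, and here I would bring in the theory of f-generic types. Definable amenability provides a global $G$-invariant Keisler measure, and in the $NIP$ setting a global type is f-generic exactly when all of its $G$-translates are non-forking over $M$; by \cite{CP} the minimal subflows of $\Sext{G}{M}$ are precisely the orbit closures of such types, and the $G$-action on f-generic types factors through $G/G^{00}$, each f-generic $p$ having $G^{00}\le\Stab(p)$. The key step is then to show that the kernel of $\pi$ restricted to $u*I$ is trivial. Using the invariant measure together with this $G^{00}$-invariance, I would argue that an f-generic type in a fixed minimal ideal is determined by its image in $G/G^{00}$, so that $\pi$ is a bijection on $u*I$; combined with the surjection this yields $u*I\cong G/G^{00}$.

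I expect this injectivity step to be the main obstacle, and to be exactly where definable amenability is indispensable rather than bare $NIP$. The delicate point is to control the action of $G^{00}$ on a minimal flow closely enough that two distinct elements of an ideal subgroup cannot lie over the same coset; the invariant measure and the non-forking characterisation of f-generics are what supply this control, essentially by making the f-generic types rigid enough to be recovered from their image in $G/G^{00}$. Without definable amenability this rigidity is unavailable---f-generic types may fail to exist and minimal flows can be large---which is why the statement holds only as a conjecture in general and genuinely fails, as the $SL(2,\R)$ computation of \cite{GPP} shows.
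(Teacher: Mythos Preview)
There is no proof in the paper to compare against: the statement is recorded as a \emph{conjecture} (Newelski's Ellis group conjecture), and the paper explicitly notes that as a bare assertion about $NIP$ groups it is false, citing the $SL(2,\R)$ analysis of \cite{GPP} and the families in \cite{Jag} as counterexamples. You correctly flag this at the outset.

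What you then propose is not a proof of the conjecture but of its restriction to definably amenable $NIP$ groups. That restricted statement is likewise not proved in the present paper: it is \emph{quoted} as Theorem~\ref{TheoremNIPAmenable}, attributed to Chernikov and Simon \cite{CS}, and thereafter used as a black box in Sections~\ref{SectionDEA} and~4. Your outline---surjection onto $G/G^{00}$ via the universal property and the compact-group structure on the bounded quotient, injection via the theory of $f$-generic types and the $G^{00}$-stabiliser property---is a fair high-level shape of the Chernikov--Simon argument, but the injectivity paragraph is exactly where all of the content of \cite{CS} lives, and your assertion that an $f$-generic in a fixed minimal ideal ``is determined by its image in $G/G^{00}$'' is precisely the hard step you have left unargued. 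In any event, the paper's own contribution is orthogonal to this: it computes Ellis groups for certain \emph{non}-amenable $NIP$ groups (those admitting a good decomposition $G=KH$), where the conjecture genuinely fails and the answer is a subgroup of $K/K^{00}$ rather than $G/G^{00}$.
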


Another conjecture, generally found in \cite{New2} states:
\begin{conjecture}
	Let $N \succ M$. The Ellis group of $\Sext{G}{M}$ and the Ellis group of $\Sext{G}{N}$ are isomorphic.
\end{conjecture}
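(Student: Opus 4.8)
The plan is to prove the (weaker) conjecture by computing the Ellis group of $\Sext{G}{M}$ explicitly as an object built only from $G$ itself---so that nothing in the answer refers to the ambient model, and the isomorphism of the Ellis groups over $M$ and over $N$ becomes automatic. Since the strong form of the Ellis group conjecture is known to fail and the present (weaker) form is open in general, I work under the two structural hypotheses of the paper: $G$ is $NIP$ and either carries a definable, definably extremely amenable normal subgroup $H$, or decomposes definably as an $fsg$ group times a definably extremely amenable group. In both situations the definably extremely amenable part is the engine of model-independence, because extreme amenability produces a \emph{global} invariant type that survives verbatim into every elementary extension.

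First I would record the dynamical meaning of definable extreme amenability of $H$: there is a global $H$-invariant type $d$, and the corresponding external type is a fixed point of the $H$-flow, hence an idempotent of the Ellis semigroup spanning a one-point minimal ideal; in particular the Ellis group of $H$ is trivial. The point is that $d$ is $H$-invariant over $\GC$, so the same type witnesses triviality of the $H$-dynamics uniformly over $M$ and over every $N \succ M$, rather than via a choice that could vary with the model.

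Next I would study the projection $\pi : G \to G/H$. It induces a continuous, equivariant semigroup homomorphism $\pi_* : \Sext{G}{M} \to \Sext{G/H}{M}$, which carries minimal ideals to minimal ideals and idempotents to idempotents, and therefore maps the Ellis group of $G$ onto that of $G/H$ (surjectivity coming from density of the principal types). Using $d$ I would construct a section: given a point $q$ of the minimal ideal of $G/H$, lift it through $\pi$ and twist by $d$ to obtain a canonical preimage in the minimal ideal of $G$, checking that $H$-invariance of $d$ makes the lift depend only on the coset $gH$ and renders it multiplicative. Triviality of the $H$-dynamics then forces the kernel of $\pi_*$ on Ellis groups to be trivial, giving an isomorphism between the Ellis group of $G$ and that of $G/H$. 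In the decomposition case the same mechanism identifies the Ellis group of $G$ with the Ellis group of the $fsg$ factor $K$, which by Chernikov--Simon equals $K/K^{00}$. Since $(G/H)/(G/H)^{00}$ (respectively $K/K^{00}$) is defined purely from $G$ and is model-independent by the cited fact on $G^{00}$, the Ellis groups over $M$ and over $N$ are isomorphic, and when the twisting by $d$ is itself definable the isomorphism is definable, yielding the conjecture in its full form.

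The hard part will be the section-and-kernel step: verifying that definable extreme amenability genuinely annihilates the $H$-part of the Ellis group \emph{uniformly}, with no residual model-dependence hiding in the coheir product $p * q = \tp(a \cdot b / M^{ext})$. The danger is that over a larger model $N$ the multiplication on the minimal ideal could encode finer information than over $M$. To exclude this I would show that every element of the minimal ideal of $G$ is realized by a product of a (lifted) generic element of $G/H$ and a realization of $d$, reduce $*$ on the ideal to the group operation on such realizations, and confirm that the resulting formula for the ideal-subgroup multiplication depends only on cosets modulo $(G/H)^{00}$ and on the fixed invariant type $d$---data that does not change on passing from $M$ to $N$.
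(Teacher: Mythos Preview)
The statement you are attempting to prove is a \emph{conjecture} in the paper, not a theorem; the paper does not claim a proof of it in general, and in the good-decomposition case explicitly leaves model-independence open (see the Question following Theorem~\ref{TheoremEllis}). So you are not comparing against a proof of the paper---you are proposing to settle something the paper does not settle.

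In the normal-subgroup case your outline is essentially what the paper does in Section~\ref{SectionDEA}: multiplying on the right by the $H$-invariant type collapses $\Sext{G}{M}$ onto a subflow isomorphic to $\Sext{G/H}{M}$, whence the Ellis groups of $G$ and of $G/H$ coincide (Proposition~\ref{PropDEAIsom}). That part is fine, but note it only reduces the conjecture for $G$ to the conjecture for $G/H$; it does not prove the conjecture unless you know it for the quotient.

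The genuine gap is in the decomposition case. Your assertion that ``the same mechanism identifies the Ellis group of $G$ with the Ellis group of the $fsg$ factor $K$, which by Chernikov--Simon equals $K/K^{00}$'' is false. When $H$ is not normal there is no quotient $G/H$ and no semigroup homomorphism $\Sext{G}{M}\to\Sext{K}{M}$; the product of two elements of $\EGen{K}{M}*p$ is governed by the twisted action $p *_1 q$ (Lemma~\ref{LemmaTypeP} and Proposition~\ref{PropSameSG}), not by the semigroup structure of $\Sext{K}{M}$ alone. The paper shows only that the Ellis group embeds into $K/K^{00}$ (Theorem~\ref{TheoremEllis}), as the image $q_0*(p*_1\EGen{K}{M})/K^{00}$, and this image is typically a \emph{proper} subgroup: already for $SL_2(\R)=SO_2\cdot H$ the Ellis group is not $SO_2/SO_2^{00}$. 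Which subgroup one obtains depends on the $K^{00}$-cosets hit by $q\mapsto p*_1 q$ on generics, and the paper's open Question is precisely whether this data is stable under passing from $M$ to $N$. Your ``section-and-kernel'' step tacitly assumes the answer is yes (indeed, that the twist is trivial), which is exactly the unresolved point; the invariance of the global type $d$ does not by itself control how realizations of $d$ conjugate elements of $K$ into $K^{00}$-cosets over larger models.
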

The stipulation of the second conjecture is that the isomorphism can be found in some definable way. Note that in general there is no obvious relationship between the types in $\Sext{G}{N}$ and $\Sext{G}{M}$ since there is no natural way in which the externally definable subsets of $M$ should be interpreted in $N$. Newelski \cite{New2} proposed the following solution to this problem. Let $N'$ be an elementary extension of $M_{ext}$ in the language $\mathcal{L}_{ext,M}$. Then $N = N'|\mathcal{L}$ is an elementary extension of $M$ in the original language such that every externally definable subset of $M$ has a natural interpretation in $N$. Extension of this kind is called $*$-elementary extension, and we write $M \prec^* N$ to denote it. Note that in this case types in $\Sext{G}{N}$ extend types in $\Sext{G}{M}$ since the language $\mathcal{L}_{ext,N}$ extends $\mathcal{L}_{ext,M}$. One can then state the following:
\begin{conjecture}
	There is an ideal subgroup in $\Sext{G}{N}$ whose restriction to $M$ is an ideal subgroup in $\Sext{G}{M}$.
\end{conjecture}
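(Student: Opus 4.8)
The plan is to compare the two flows through the natural restriction map and to show that it behaves like a homomorphism of Ellis semigroups on a large enough part of $\Sext{G}{N}$. Since $M \prec^* N$, the language $\mathcal{L}_{ext,N}$ extends $\mathcal{L}_{ext,M}$, and the $\mathcal{L}_{ext,M}$-reduct of $N^{ext}$ is an elementary extension of $M^{ext}$. Restricting a type first to $\mathcal{L}_{ext,M}$ and then to parameters from $M$ therefore defines a map $r : \Sext{G}{N} \to \Sext{G}{M}$, which is continuous (preimages of basic clopen sets are clopen) and surjective (every external type over $M$ extends, e.g. by its unique coheir). The goal then reduces to producing a minimal left ideal $I$ of $\Sext{G}{N}$ and an idempotent $u \in I$ such that $r(u * I)$ is an ideal subgroup of $\Sext{G}{M}$; by the general semigroup theory recalled above this is automatic once $r$ is a continuous surjective homomorphism on the relevant ideal.

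The first difficulty is that $r$ is not a semigroup homomorphism globally: the product $*$ is computed via $\otimes$, i.e. via heirs and coheirs, and the restriction of a coheir (resp. heir) is the coheir (resp. heir) of the restriction only for types already finitely satisfiable (resp. definable) over $M$. I would isolate the coheir part. Let $E_M \subseteq \Sext{G}{N}$ be the set of types finitely satisfiable in $M$; since a product of coheirs is again a coheir, $E_M$ is a closed subsemigroup. The computation I expect to carry the argument is the following: if $p \in E_M$ and $q$ is arbitrary, then a realization of $p \otimes q$ over $N$ also realizes $r(p) \otimes r(q)$ over $M$, because the coheir of $p$ over $N b$ remains finitely satisfiable in $M$. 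This yields the one-sided identity $r(p * q) = r(p) * r(q)$ for $p \in E_M$. In particular $r$ restricts to a homomorphism on $E_M$, and by the uniqueness and existence of coheir extensions under $NIP$ (the quoted result of Shelah), $r|_{E_M}$ is a continuous bijection onto $\Sext{G}{M}$, hence a topological semigroup isomorphism, with inverse the coheir section $\iota$.

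To conclude I would transport the minimal-ideal structure. The one-sided identity, applied with the left factor $\iota(\bar s) \in E_M$, already shows that for \emph{any} minimal left ideal $I$ of $\Sext{G}{N}$ the image $r(I)$ is a closed left ideal of $\Sext{G}{M}$: for $\bar s \in \Sext{G}{M}$ and $x \in I$ one has $\bar s * r(x) = r(\iota(\bar s)) * r(x) = r(\iota(\bar s) * x)$, and $\iota(\bar s) * x \in \Sext{G}{N} * I \subseteq I$. Thus $r(I)$ contains a minimal left ideal of $\Sext{G}{M}$, and $r$ maps the idempotent $u$ to an idempotent $r(u)$.

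The step I expect to be the genuine obstacle is minimality: I must arrange that $r$ carries a minimal left ideal of $\Sext{G}{N}$ \emph{onto} a minimal one of $\Sext{G}{M}$, with an idempotent lying in $E_M$. The clean way to force this is to show that a minimal left ideal of $\Sext{G}{N}$ can be chosen inside $E_M$ — equivalently, that some minimal idempotent of $\Sext{G}{N}$ is finitely satisfiable in $M$. Granting this, $r$ on that ideal is the isomorphism $r|_{E_M}$, and it carries the ideal subgroup $u * I$ onto the ideal subgroup $r(u) * r(I)$ of $\Sext{G}{M}$, which is exactly the required restriction. Pinning down the almost periodic (minimal) types as finitely satisfiable in a small model is precisely where I expect the paper's structural hypotheses — the $NIP$ decomposition into an $fsg$ and a definably extremely amenable part — to be needed, since it is there that the relevant minimal flow admits an explicit description.
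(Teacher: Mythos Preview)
The statement you are trying to prove is labeled a \emph{Conjecture} in the paper, and the paper does \emph{not} prove it. Immediately after stating it, the paper remarks that the restriction map $r:\Sext{G}{N}\to\Sext{G}{M}$ is not a semigroup homomorphism and that ``it is not known whether the image of an ideal subgroup by $r$ is an ideal subgroup''. So there is no ``paper's own proof'' to compare against; your task was in effect to settle an open problem.

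Your outline is a reasonable reformulation of the difficulty, and the part about the coheir subsemigroup $E_M$ is essentially correct: $r$ does restrict to a topological semigroup isomorphism $E_M\cong\Sext{G}{M}$, and the one-sided identity $r(p*q)=r(p)*r(q)$ for $p\in E_M$ does give that $r(I)$ is a closed left ideal for any minimal $I$. But you yourself flag the real gap: nothing in your argument produces a minimal left ideal of $\Sext{G}{N}$ lying inside $E_M$, or even an idempotent of a minimal ideal lying in $E_M$. Without that, you cannot conclude that $r(u)$ is idempotent, that $r(I)$ is minimal, or that $r(u*I)$ is a group at all. Your final paragraph appeals to ``the paper's structural hypotheses'' to close this gap, but the paper does not do this for you: in the good-decomposition case the minimal flow found in Section~4 has the form $I*p$ with $p$ an $H$-invariant type realized via \emph{heirs}, not coheirs, so its elements are typically not finitely satisfiable in $M$ and hence do not live in $E_M$. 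Indeed, the paper ends Section~4 with an explicit open \emph{Question} about whether the relevant $K^{00}$-cosets match up over $M$ and $N$, and only lists special situations (trivial action of $H$ on $K$, groups definable over $\mathbb{R}$, definable generics, stable stably embedded $K$) where model-independence is known.

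In short: your proposal is not a proof but a restatement of the problem together with its main obstruction, and the paper neither proves the conjecture nor supplies the missing ingredient you hoped to borrow.
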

Note that in general, the restriction map $r : \Sext{G}{N} \rightarrow \Sext{G}{M}$ is not a semigroup homomorphism, so it is not known whether the image of an ideal subgroup by $r$ is an ideal subgroup, or whether an ideal subgroup in $\Sext{G}{M}$ is an epimorphic image of an ideal subgroup in some extension.

\section{Definable amenability}
In this section we recall facts about definably amenable groups and their specific cases. We also consider them from the point of view of topological dynamics.

The following definition appears for example in \cite{CS}:
\begin{definition}
	Let $G$ be a definable group. We say that $G$ is definably amenable if there is a finitely additive probabilistic measure on the algebra of the definable subsets of $G$ invariant under the group action.
\end{definition}
The measure stipulated in the definition is called a Keisler measure. Definably amenable $NIP$ groups are one of the large classes for which topological dynamics have been described in detail. Chernikov and Simon showed that the Ellis group conjecture holds in this setup \cite{CS}:
\begin{theorem}
	\label{TheoremNIPAmenable}
	Let $M$ be a model of a $NIP$ theory and $G$ be a definably amenable $M$-definable group. Let $I \subset \Sext{G}{M}$ be a minimal subflow and $u \in I$ an idempotent. Then the quotient map $G \rightarrow G/G^{00}$ restricted to $u*I$ is an isomorphism.
\end{theorem}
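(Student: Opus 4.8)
The plan is to produce a single map $\pi\colon \Sext{G}{M}\to G/G^{00}$, show it is a continuous semigroup homomorphism, and then analyze its restriction to $u*I$. For the construction, note first that $G/G^{00}$, equipped with the logic topology and the left translation action of $G(M)$, is a compact Hausdorff minimal $G(M)$-flow: the orbit of the identity coset is $G(M)G^{00}/G^{00}$, which is dense, so every nonempty closed invariant subset is everything. By universality of $\Sext{G}{M}$ there is a unique continuous $G(M)$-equivariant surjection $\pi$ sending the principal type at $g$ to $gG^{00}$. Since $\Sext{G}{M}$ is its own Ellis semigroup (cited above) and $E(G/G^{00})\cong G/G^{00}$ because left translations by the dense subgroup already generate the full translation group of the compact group, the flow map $\pi$ is in fact a semigroup homomorphism. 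Concretely, on a type $p$ the value $\pi(p)$ is the coset $aG^{00}$ of any realization $a$ of the (unique, by definability of types over $M^{ext}$) global coheir of $p$; here one uses that every global type concentrates on a single coset of $G^{00}$ (the cosets are boundedly many type-definable sets and $a\in aG^{00}$), and that these cosets multiply correctly because $G^{00}$ is normal, so that the $\otimes$-description of $*$ gives $\pi(p*q)=\pi(p)\pi(q)$ directly.

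Restricting $\pi$ to the ideal subgroup $u*I$ then gives a group homomorphism, and it remains to prove bijectivity. Surjectivity is immediate and essentially formal: $\pi(u)$ is an idempotent of the group $G/G^{00}$, hence the identity coset, and $\pi(I)$ is a nonempty compact (hence closed) $G(M)$-invariant subset of the minimal flow $G/G^{00}$, so $\pi(I)=G/G^{00}$; therefore $\pi(u*I)=\pi(u)\,\pi(I)=G/G^{00}$. This part is, in fact, just the general Newelski factorization of the Ellis group through $G/G^{00}$ and does not use amenability.

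The whole content of the theorem is injectivity, i.e. that the kernel of $\pi|_{u*I}$ is trivial. Multiplying by inverses inside the group $u*I$, it suffices to show that the only $p\in u*I$ with $\pi(p)=e$ is $u$ itself. This is where definable amenability enters through the theory of $f$-generic types: since $p$ lies in a minimal subflow, its global coheir is $f$-generic, equivalently $G^{00}$-invariant under left translation, and $\pi(p)=e$ says this coheir concentrates on $G^{00}$. The idempotent $u$ enjoys the same two properties and satisfies $u*p=p$ and $u*u=u$. The plan is to prove the key uniqueness lemma: an $f$-generic type concentrating on a fixed coset of $G^{00}$ and fixed on the left by a given idempotent $u$ of $I$ is unique; applied to $p$ and $u$ (both over the identity coset) this forces $p=u$. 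I expect this lemma to be the main obstacle. Its proof should combine the characterization of $f$-generics as $G^{00}$-invariant global types, uniqueness of heirs and coheirs over $M^{ext}$, and the fact that multiplication by the $f$-generic idempotent $u$ is insensitive to everything except the $G^{00}$-coset --- intuitively because $u$ comes from a type finitely satisfiable in $M$ with realizations in $G^{00}$, so that $u*p$, computed via $(a,b)\models \tilde u\otimes\tilde p$ with $a\in G^{00}$, is pinned down by $\pi(p)$ alone. Making this insensitivity precise, presumably via the invariant Keisler measure underlying definable amenability together with a forking/invariance computation, is the crux of the argument.
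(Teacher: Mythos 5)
First, a point of comparison that matters here: the paper gives \emph{no proof} of this statement. Theorem \ref{TheoremNIPAmenable} is imported wholesale from Chernikov--Simon \cite{CS}, so your attempt has to stand on its own as a proof of that theorem. The parts you actually complete are correct, but they are the formal parts. The map $\pi\colon \Sext{G}{M}\to G/G^{00}$ is indeed well defined (equality of types over a model refines every bounded $M$-type-definable equivalence relation, in particular $x^{-1}y\in G^{00}$), it is a continuous $G(M)$-equivariant semigroup homomorphism, $\pi(u)$ is an idempotent of the group $G/G^{00}$ hence the identity, and $\pi(u*I)=\pi(I)=G/G^{00}$ by minimality of $G/G^{00}$ as a $G(M)$-flow. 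But this much is exactly Newelski's older result that the Ellis group maps \emph{onto} $G/G^{00}$, which the paper already records as known \cite{New2}; it uses neither definable amenability nor NIP in any essential way.

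The genuine gap is injectivity, which you yourself correctly call ``the whole content of the theorem,'' and which your text plans rather than proves. You reduce it to a uniqueness lemma (an $f$-generic type concentrating on a prescribed coset of $G^{00}$ and fixed on the left by the idempotent $u$ is unique) and then explicitly defer that lemma as ``the main obstacle'' and ``the crux.'' Moreover, the ingredients you want to feed into it --- that types lying in a minimal subflow have $f$-generic global extensions, and that for definably amenable NIP groups $f$-generic is equivalent to $G^{00}$-invariant (equivalently $\Stab(p)=G^{00}$) --- are themselves the central theorems of \cite{CS}, i.e.\ of the very paper whose result you are proving; invoking them as background is circular, and proving them requires the machinery of invariant Keisler measures and forking in NIP theories that your sketch only gestures at. For what it is worth, your instinct about where the argument goes is sound: granting those two facts, one can show that if $\pi(r)=e$ and $s$ has a $G^{00}$-invariant global heir then $r*s=s$ (realize $(a,b)\models r\otimes s$, note $a\in G^{00}$, and use invariance of the heir of $s$ to get $ab\models s$), and applying this with $r=p$, $s=u$ together with $p*u=p$ yields $p=u$. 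But as written, the proposal stops exactly where the theorem begins: the skeleton is right, the mathematical content is missing.
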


In this paper, we will consider groups described in terms of subgroups being specific cases of definably amenable groups. The group themselves will usually not be definably amenable. The motivation for the particular decompositions comes from the study of groups definable in o-minimal setting. The two particular cases we are interested in are $fsg$ groups and definably extremely amenable groups.

\subsection{Finitely satisfied generics}
The following definition can be found in \cite{HPP}:

\begin{definition}
	$G$ admits finitely satisfied generics (in short: ``$G$ has $fsg$'') if there is a global type $p(x)$ in $G$  and a small model $M$ such that every $G$-translate of $p$ is finitely satisfiable in $M$.
\end{definition}

In \cite{CS}, the $fsg$ groups are characterized as possessing a unique generic Keisler measure. In particular, $fsg$ groups are definably amenable. Examples of such groups are definably compact groups definable in o-minimal expansions of real closed fields, or over the field of $p$-adic numbers.

Recall that a subset $X \subset G$ is called generic if finitely many of its translates cover $G$. A type in $G$ is generic if it only contains formulas defining generic subsets. Likewise an external type in $G$ is generic if it only contains generic external subsets of $G$. The following properties of $fsg$ groups can be found in \cite{HPP}:
\begin{fact}
	\label{FactFSGGenerics}
	Let $G$ be $fsg$ and $M$ any small model. Then there is a generic global type $p \in S_G(\GC)$. Moreover, for any such a type:
	\begin{enumerate}
		\item Every left and right translate of $p$ is generic and finitely satisfiable in $M$.
		\item $G^{00}$ exists and is both the left and the right stabilizer of $p$.
	\end{enumerate}
\end{fact}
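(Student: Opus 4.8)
The plan is to exploit the one genuinely non-trivial feature of the $fsg$ hypothesis, namely that $q$ and all its translates are finitely satisfiable in a \emph{small} model $M$: finite satisfiability in a small model combines with saturation of $\GC$ to turn ``every translate meets $M$'' into an honest finite subcover. I would first produce a generic type essentially as the inverse of the witnessing type $q$, then verify property (1) by a soft argument, and finally attack property (2) by identifying $\Stab(p)$ with $G^{00}$, the last equality being the real content.

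For existence, fix $\phi(x)\in q$ and put $X=\phi(\GC)$. For every $g\in G(\GC)$ the formula $\phi(g^{-1}x)$ lies in the translate $g\cdot q$ (it holds of $ga$ for $a\models q$), and since $g\cdot q$ is finitely satisfiable in $M$ there is $m\in M$ with $g^{-1}m\in X$, i.e.\ $m\in gX$. Thus $gX\cap M\neq\emptyset$ for all $g$, which rewrites as $G(\GC)=\bigcup_{m\in M}mX^{-1}$. Now the partial type $\{x\in G\}\cup\{x\notin mX^{-1}:m\in M\}$ is over the \emph{small} parameter set $M$ together with the parameters of $\phi$; were it finitely satisfiable it would be realized by saturation, contradicting the displayed covering. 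Hence there is a finite subcover and $X^{-1}$ is generic. As $\phi$ ranged over $q$, every formula of the inverse type $p:=\tp(a^{-1}/\GC)$ (with $a\models q$) is generic, so $p$ is a generic global type in $S_G(\GC)$.

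Property (1) is then formal. Genericity is translation invariant, since $G=\bigcup_i g_i X$ gives $G=\bigcup_i (hg_i)(h^{-1}X)h\cdots$ after translating, so every left or right translate of a generic type is again generic. For finite satisfiability in $M$, note that if $\phi$ is generic, say $G=\bigcup_{i\le n}g_i\phi(\GC)$, then the valid formula $\bigvee_i\phi(g_i^{-1}x)$ lies in \emph{every} global type, in particular in $q$; completeness of $q$ forces some $\phi(g_i^{-1}x)\in q$, i.e.\ $\phi\in g_i\cdot q$, and finite satisfiability of $g_i\cdot q$ in $M$ yields $\phi(M)\neq\emptyset$. Since a generic type is closed under conjunction and its conjunctions remain generic, it is finitely satisfiable in $M$. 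This also shows any generic $p$ is $M$-invariant. For the easy half of (2), the orbit $\{gp:g\in G(\GC)\}$ consists of generic types finitely satisfiable in $M$, of which there are only boundedly many, so $[G:\Stab(p)]$ is bounded; as $\Stab(p)$ is an $\mathrm{Aut}(\GC/M)$-invariant subgroup of bounded index, it contains $G^{00}$.

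The main obstacle is the reverse inclusion $\Stab(p)\subseteq G^{00}$, i.e.\ that $gp=p$ forces $g\in G^{00}$; this is where the substantive $fsg$ machinery enters and cannot be obtained by the soft manipulations above. I would route it through the unique invariant (generic) Keisler measure $\mu_0$ of $G$, whose pushforward along $G\to G/G^{00}$ is the Haar measure on the compact group $G/G^{00}$. Using that $p$ lies in the support of $\mu_0$, one shows the orbit map descends to an injection $G/G^{00}\hookrightarrow\{gp\}$, so distinct $G^{00}$-cosets give distinct translates of $p$; combined with the containment already proved this yields $\Stab(p)=G^{00}$. The two-sidedness of the statement then follows because $\mu_0$ is simultaneously left- and right-invariant, so the symmetric argument identifies the right stabilizer with $G^{00}$ as well. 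I expect the construction of $\mu_0$ and the faithfulness of the $G/G^{00}$-action on generic types to be the genuinely hard steps, the rest being bookkeeping with heirs, coheirs, and the coincidence of left and right genericity.
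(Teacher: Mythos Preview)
The paper does not give a proof of this statement; it is recorded as a Fact with a reference to \cite{HPP}, so there is no in-paper argument to compare against. Your sketch of the existence of a generic type and of the translation-invariance of genericity is the standard one and is correct. Two points, however, deserve attention.

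First, a minor gap: the $fsg$ hypothesis supplies one witness model $M_0$, and your finite-satisfiability argument (``$\phi\in g_i\cdot q$, and $g_i\cdot q$ is finitely satisfiable in $M$'') works only for $M=M_0$. The Fact asserts finite satisfiability in an \emph{arbitrary} small model; the passage from $M_0$ to all $M$ is a separate short step in \cite{HPP} which you have not supplied.

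Second, and more substantively, you have the two halves of item (2) reversed in difficulty. Once $G^{00}$ exists (granted by Shelah's theorem in the ambient $NIP$ setting), the inclusion $\Stab(p)\subseteq G^{00}$ is immediate and needs no measures: any complete global type concentrates on a single coset of the normal type-definable subgroup $G^{00}$, and left translation by $g\notin G^{00}$ moves that coset, so $gp\neq p$. The nontrivial half is $\Stab(p)\supseteq G^{00}$. Your justification for it --- ``$\Stab(p)$ is $\mathrm{Aut}(\GC/M)$-invariant of bounded index, hence contains $G^{00}$'' --- only yields $\Stab(p)\supseteq G^{000}$; upgrading to $G^{00}$ requires showing that $\Stab(p)$ is type-definable, equivalently that $G^{00}$ acts trivially on the set of generic types. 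This is where the specific $fsg$ combinatorics enter, and in \cite{HPP} it is handled directly from the structure of generic sets rather than via the Keisler-measure route you outline.
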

Clearly, any generic type is a finitely satisfiable extension (a coheir) of its own restriction to any submodel.

Let $p$ be a global type in $G$ finitely satisfiable in some small model $M$. Then the set $p^M = \{\phi(M) : \phi \in p\}$ is an element of $S_G(M^{ext})$. It is easy to see the following:
\begin{fact}
	\label{FactGenericLR}
	Let $G$ be $fsg$. Let $p \in S_G(\GC)$ be a global type. The following are equivalent:
	\begin{enumerate}[(i)]
		\item $p$ is generic.
		\item For some small model $M$, $p^M \in S_G(M^{ext})$ is generic.
		\item For any small model $M$, $p^M \in S_G(M^{ext})$ is generic.
	\end{enumerate}
\end{fact}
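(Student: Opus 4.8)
The plan is to reduce the statement to a single-formula transfer principle and then drive everything with the unique invariant Keisler measure of the $fsg$ group. Concretely, for a $\GC$-definable set $\phi$ (with parameters $\bar c$), I want to show that $\phi(\GC)$ is generic in $G(\GC)$ if and only if $\phi(M)$ is generic in $G(M)$. Granting this for every $\phi$, the three conditions follow at once: since $p^{M}=\{\phi(M):\phi\in p\}$ exhausts the external sets in $p^{M}$ (whenever $p$ is finitely satisfiable in $M$, which holds for generic $p$ by Fact~\ref{FactFSGGenerics}), genericity of $p$ is the conjunction over $\phi\in p$ of genericity of $\phi(\GC)$, while genericity of $p^{M}$ is the conjunction of genericity of the $\phi(M)$. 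Then (iii)$\Rightarrow$(ii) is immediate, (ii)$\Rightarrow$(i) follows from the ``external $\Rightarrow$ global'' half applied formulawise, and (i)$\Rightarrow$(iii) from the ``global $\Rightarrow$ external'' half.

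The engine is the unique left-invariant global Keisler measure $\mu_{0}$ of $G$, which for an $fsg$ group is also the unique right-invariant measure, is generically stable, and---crucially---is finitely satisfiable in every small model $M$ (see \cite{HPP}, \cite{CS}). I will use two consequences. First, a $\GC$-definable set $X$ is generic if and only if $\mu_{0}(X)>0$ (the support of $\mu_{0}$ is exactly the set of generic types). Second, by finite satisfiability of $\mu_{0}$ in $M$, any $\GC$-definable $Y$ with $Y\cap G(M)=\emptyset$ satisfies $\mu_{0}(Y)=0$.

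The ``external $\Rightarrow$ global'' direction is then short. Suppose $\phi(M)$ is generic in $G(M)$, say $G(M)=\bigcup_{i\le n}h_{i}\phi(M)$ with $h_{i}\in G(M)$, and set $\psi(x)=\bigvee_{i\le n}\phi(h_{i}^{-1}x,\bar c)$. Then $\psi(M)=G(M)$, so $\neg\psi$ has no point in $G(M)$ and hence $\mu_{0}(\neg\psi(\GC))=0$; thus $\mu_{0}(\psi(\GC))=1$. Since $\psi(\GC)=\bigcup_{i\le n}h_{i}\phi(\GC)$, invariance and subadditivity give $\mu_{0}(\phi(\GC))\ge 1/n>0$, so $\phi(\GC)$ is generic. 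Applied formulawise this yields (ii)$\Rightarrow$(i), and (iii)$\Rightarrow$(ii) is trivial.

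The substantive direction is ``global $\Rightarrow$ external'', and here lies the main obstacle: the translates $g_{i}\in G(\GC)$ witnessing genericity of $\phi(\GC)$ need not lie in $G(M)$, so genericity cannot be transported by bare elementarity. My plan is to force the witnesses into $M$. Writing $\chi(\bar y)$ for the $\bar c$-definable set of tuples $\bar y\in G^{n}$ whose translates $y_{1}\phi,\dots,y_{n}\phi$ cover $G$, it suffices to show that $\chi$ is generic in $G^{n}$ for some $n$: then, $G^{n}$ being $fsg$ with invariant measure $\mu_{0}^{\otimes n}$ finitely satisfiable in $M$, the generic set $\chi$ meets $G(M)^{n}$, and any witnessing tuple $\bar h\in G(M)^{n}$ covers $G(\GC)\supseteq G(M)$, giving the external cover $G(M)=\bigcup_{i}h_{i}\phi(M)$. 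Proving that $\chi$ is generic once $\mu_{0}(\phi(\GC))=\epsilon>0$ is the technical heart: for a single point the probability under $\mu_{0}^{\otimes n}$ of being missed by all $n$ independent translates is $(1-\epsilon)^{n}$, and to pass from ``each point'' to ``some point'' uniformly I expect to invoke the NIP (finite VC) structure of the family $\{y\phi\}$, via an $\epsilon$-net argument together with the measure, as in the standard $fsg$ covering estimates. This step is where $NIP$ and the invariant measure genuinely enter; the remainder is bookkeeping.
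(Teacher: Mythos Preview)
The paper does not actually prove this Fact; it is introduced with ``It is easy to see the following'' and left without argument. So there is no paper proof to compare against, and I will assess your proposal on its own merits.

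Your reduction to a single-formula transfer principle is the right framing, and your argument for (ii)$\Rightarrow$(i) is clean and correct: if $h_1,\dots,h_n\in G(M)$ witness that $\phi(M)$ is generic externally, then $G\setminus\bigcup_i h_i\phi$ misses $G(M)$, so by finite satisfiability of $\mu_0$ in $M$ it has measure zero, whence $\mu_0(\phi)\ge 1/n>0$ and $\phi(\GC)$ is generic.

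For (i)$\Rightarrow$(iii), your plan is viable but far heavier than needed. You aim to show that the set $\chi\subseteq G^n$ of covering tuples is \emph{generic} in $G^n$ via a VC/$\epsilon$-net estimate, so as to locate a point of $\chi$ in $G(M)^n$. But one reaches $\chi\cap G(M)^n\neq\emptyset$ with no uniform estimate at all. Fix $a\in G(\GC)$. The set $\{g: a\in g\phi\}=a\phi^{-1}$ is a left translate of $\phi^{-1}$; since $\mu_0$ is the unique bi-invariant Keisler measure it is inverse-invariant, so $\mu_0(a\phi^{-1})=\mu_0(\phi)>0$. By finite satisfiability of $\mu_0$ in $M$, the set $a\phi^{-1}$ meets $G(M)$; that is, $a\in g\phi$ for some $g\in G(M)$. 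As $a$ was arbitrary, $G(\GC)=\bigcup_{g\in G(M)} g\phi(\GC)$, and now ordinary compactness of the type space yields finitely many $g_1,\dots,g_n\in G(M)$ with $G(\GC)=\bigcup_i g_i\phi(\GC)$; in particular $G(M)=\bigcup_i g_i\phi(M)$. (Equivalently, one can bypass the measure entirely: $\phi$ generic $\Rightarrow$ $a\phi^{-1}$ generic $\Rightarrow$ $a\phi^{-1}$ contains a global generic type, which by Fact~\ref{FactFSGGenerics} is finitely satisfiable in $M$, so $a\phi^{-1}$ meets $G(M)$; then compactness as before.)

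So the step you flag as the ``technical heart'' dissolves: you do not need to cover $G$ uniformly by a random $n$-tuple; it suffices to cover $G$ pointwise by $G(M)$-translates and then invoke compactness for the finite subcover. This is presumably why the paper regards the fact as immediate.
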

The above fact allows us to identify the set of generic types in $S_G(M^{ext})$ (for any $M$) with the set of global generic types. For a definable group $G$, denote by $\EGen{G}{M}$ the set of (left) generic types in $\Sext{G}{M}$.
From the dynamical point of view, Newelski proves in \cite{New1}:
\begin{proposition}
	\label{PropGenericUnique}
	Let $G$ be a group and assume that there is a generic $p \in \Sext{G}{M}$. Then the set $\EGen{G}{M}$ is the unique minimal subflow of $\Sext{G}{M}$.
\end{proposition}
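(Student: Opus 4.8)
The plan is to show that $\EGen{G}{M}$ is a subflow of $\Sext{G}{M}$ which is contained in \emph{every} subflow; from this, both minimality and uniqueness follow at once. So the strategy splits into three parts: verifying that $\EGen{G}{M}$ is a subflow, proving the key inclusion, and drawing the conclusion.

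First I would check that $\EGen{G}{M}$ is a subflow, i.e. a nonempty, closed, $G(M)$-invariant set. Nonemptiness is exactly the hypothesis. For closedness, I would observe that a type fails to be generic precisely when it contains some non-generic externally definable $X$; since the set of types containing a fixed $X$ is basic clopen, the non-generic types form an open set, so $\EGen{G}{M}$ is closed. Invariance follows because a left translate $gX$ of a generic set $X$ is again generic (translating a finite cover $G(M)=\bigcup_i g_i X$ by $g$ yields another finite cover), hence a left translate of a generic type is generic.

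The heart of the argument, and the step I expect to require the most care, is to show that \emph{every generic type lies in the orbit closure of an arbitrary point}: if $p\in\EGen{G}{M}$ and $q\in\Sext{G}{M}$ is arbitrary, then $p\in\cl(G(M)\cdot q)$. To see this I would fix a basic clopen neighborhood $[\phi]$ of $p$, where $X:=\phi(M)$ is generic, say $G(M)=\bigcup_{i=1}^n g_i X$. Since $q$ is a complete type, the whole group lies in $q$, and a finite union lying in an ultrafilter forces one piece $g_i X$ to lie in $q$; unwinding the definition of the left action, namely $g\cdot q\in[\phi]$ iff $g^{-1}X\in q$, this says exactly that $g_i^{-1}\cdot q\in[\phi]$. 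Hence every neighborhood of $p$ meets the orbit of $q$, giving $p\in\cl(G(M)\cdot q)$. The delicate points here are keeping the left/right translation conventions straight, checking that the translated sets remain externally definable, and the elementary but essential observation that an ultrafilter containing a finite union must contain one of its members.

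Finally I would conclude. By the previous step, $\EGen{G}{M}\subseteq\cl(G(M)\cdot q)$ for every $q$. Since any subflow $Y$ is nonempty, closed and invariant, it contains $\cl(G(M)\cdot q)$ for each of its points $q$, and therefore $\EGen{G}{M}\subseteq Y$ for every subflow $Y$. Thus $\EGen{G}{M}$ is a subflow contained in all subflows: it has no proper subflow, so it is minimal, and any minimal subflow must contain it and hence equal it. This shows $\EGen{G}{M}$ is the unique minimal subflow of $\Sext{G}{M}$.
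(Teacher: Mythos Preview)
Your argument is correct and is the standard one. Note, however, that the paper does not actually prove this proposition: it is quoted as a result of Newelski \cite{New1}, so there is no in-paper proof to compare against. That said, the argument you give is essentially the one Newelski uses: generics form a closed invariant set because non-genericity is witnessed by membership in a single clopen, and the finite-cover property forces every orbit to meet every generic clopen, so the generic types sit inside every orbit closure. Your handling of the left-translation conventions and the ultrafilter pigeonhole step is accurate.
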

Now let $G$ be an $M$-definable $fsg$ group in a $NIP$ theory. Combining Proposition \ref{PropGenericUnique} with Theorem \ref{TheoremNIPAmenable}, we easily see the following:
\begin{fact}
	The minimal flow $\EGen{G}{M}$ decomposes into ideal subgroups of the form $q * \EGen{G}{M}$ where $q$ is a generic with $q \vdash G^{00}$.
\end{fact}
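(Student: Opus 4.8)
The plan is to read off the statement from three ingredients already at hand: Proposition~\ref{PropGenericUnique}, the structure theory of minimal ideals of compact semigroups recalled in the preliminaries, and Theorem~\ref{TheoremNIPAmenable}. First I would note that $\EGen{G}{M}$ is the minimal ideal of the Ellis semigroup $\Sext{G}{M}$. Indeed, by Fact~\ref{FactGenericLR} there is a generic type in $\Sext{G}{M}$, so Proposition~\ref{PropGenericUnique} identifies $\EGen{G}{M}$ as the unique minimal subflow, equivalently the minimal ideal $I$. The general decomposition recalled above then gives $I=\bigcup_{u\in J(I)} u*I$, a union of ideal subgroups indexed by the idempotents $u\in J(I)$. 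Since every such idempotent lies in $\EGen{G}{M}$ and is therefore a generic type $q$, each ideal subgroup is of the form $q*\EGen{G}{M}$ with $q$ generic and idempotent. This already yields the asserted decomposition, and only the identification $q\vdash G^{00}$ of the idempotents remains.

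To establish $q\vdash G^{00}$ for such an idempotent $q$, I would apply Theorem~\ref{TheoremNIPAmenable}, which is available because an $fsg$ group is definably amenable. Taking the minimal ideal to be $I=\EGen{G}{M}$ and the idempotent to be $u=q$, the theorem asserts that the quotient map $G\to G/G^{00}$ restricts to a group isomorphism $q*I\to G/G^{00}$. Now $q$, being idempotent, is the neutral element of the group $q*I$: for any $x=q*y\in q*I$ one has $q*x=(q*q)*y=q*y=x$. An isomorphism carries neutral element to neutral element, so $q$ is sent to the neutral element of $G/G^{00}$, i.e.\ to the coset $G^{00}$ itself. Unwinding the map of Theorem~\ref{TheoremNIPAmenable}, which sends a type concentrating on a single coset of $G^{00}$ to that coset, this says precisely that $q$ concentrates on $G^{00}$, that is, $q\vdash G^{00}$.

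The only point that is not purely formal is the precise meaning of the map of Theorem~\ref{TheoremNIPAmenable} and its compatibility with the syntactic relation $q\vdash G^{00}$. For the restriction of $G\to G/G^{00}$ to $q*I$ to make sense one needs that the types populating the minimal flow concentrate on a single coset of $G^{00}$, the map then sending each such type to its coset; this concentration is exactly the input supplied by the Chernikov--Simon analysis of generic types that underlies Theorem~\ref{TheoremNIPAmenable}, and in the $fsg$ case it is consistent with Fact~\ref{FactFSGGenerics}, where $G^{00}$ appears as the (left and right) stabilizer of every generic type. Granting this, ``$q$ is sent to the identity coset'' and ``$q\vdash G^{00}$'' are the same statement, and the proof reduces to the semigroup bookkeeping above; I therefore expect no serious obstacle beyond invoking Theorem~\ref{TheoremNIPAmenable} with these identifications stated carefully.
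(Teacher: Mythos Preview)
Your argument is correct and is precisely the one the paper has in mind: the Fact is stated without a detailed proof, merely as the combination of Proposition~\ref{PropGenericUnique} with Theorem~\ref{TheoremNIPAmenable}, and you have unpacked exactly that combination together with the general decomposition $I=\bigcup_{u\in J(I)} u*I$ from the preliminaries. Your identification of the idempotent with the neutral element and hence with the coset $G^{00}$ under the isomorphism of Theorem~\ref{TheoremNIPAmenable} is the intended step.
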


We will need the following:
\begin{lemma}
	\label{LemmaSameIdealSG}
	Let $q \in \EGen{G}{M}$. Then for any $p \in \Sext{G}{M}$, $q*p$ is generic and both $q$ and $q*p$ belong to the same ideal subgroup.
\end{lemma}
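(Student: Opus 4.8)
The statement has two parts: first that $q*p$ is again generic, i.e.\ $q*p \in \EGen{G}{M}$, and second that $q$ and $q*p$ lie in the same ideal subgroup. I expect the second part to be a formal consequence of the first together with the idempotent structure of the minimal flow, so the real content is the genericity of $q*p$.

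To prove that $q*p$ is generic I would argue directly from the definition of the semigroup operation via the $\otimes$-description of the product. Identifying $q$ with the corresponding global generic type, fix a formula $\phi(x) \in q*p$ over $M^{ext}$ and take $(a,b) \models q \otimes p$, so that $ab \models q*p$ and $\GC \models \phi(ab)$. The advantage of writing the product this way is that, by the equivalent characterisation recorded just after the definition of $*$, $a$ realizes the coheir of $q$ over $b$; that is, $\tp(a/M^{ext}b)$ is finitely satisfiable in $M$. Passing to the auxiliary formula $\psi(x) := \phi(x\cdot b)$, from $\models \phi(ab)$ we get $\models \psi(a)$, so $\psi \in \tp(a/M^{ext}b)$.

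The key step is then that the coheir of a generic type is generic. Since $q$ is generic, Fact \ref{FactGenericLR} gives that its $M$-trace is generic; the coheir of $q$ is a global type finitely satisfiable in $M$ with the same $M$-trace, so Fact \ref{FactGenericLR} applies again to show that the coheir contains only generic sets, and in particular $\psi(G)$ is generic. But $\psi(G)$ is precisely the right translate $\phi(G)\cdot b^{-1}$, and left-genericity is trivially preserved under right translation: if $g_1\phi(G)b^{-1}\cup\dots\cup g_n\phi(G)b^{-1}=G$ then right-multiplying by $b$ yields $g_1\phi(G)\cup\dots\cup g_n\phi(G)=G$, so $\phi(G)$ is generic as well. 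As $\phi \in q*p$ was arbitrary, $q*p$ is generic, i.e.\ $q*p \in \EGen{G}{M}$. For the second part, recall that by Proposition \ref{PropGenericUnique} the set $\EGen{G}{M}$ is the unique minimal subflow $I$, decomposing as $I = \bigcup_{u \in J(I)} u*I$ into ideal subgroups, where for $x \in I$ one has $x \in u*I$ if and only if $u*x = x$. Letting $u$ be the idempotent with $q \in u*I$, so that $u*q = q$, associativity gives $u*(q*p) = (u*q)*p = q*p$; since $q*p \in I$ by the first part, this says exactly that $q*p \in u*I$, so $q$ and $q*p$ lie in the same ideal subgroup.

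The main obstacle is the genericity of $q*p$, and within that the only non-routine input is the preservation of genericity under passing to the coheir of the left factor; once this is isolated it is immediate from Fact \ref{FactGenericLR}. The reduction to the (trivial) right-translation invariance of left-genericity is what allows me to handle the extra parameter $b$ cleanly rather than controlling it directly, and part two then follows purely formally from associativity and the standard structure of the minimal ideal described via Theorem \ref{TheoremNIPAmenable} and Proposition \ref{PropGenericUnique}.
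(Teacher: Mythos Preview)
Your proof is correct. The argument for the second part (same ideal subgroup via $u*(q*p)=(u*q)*p=q*p$) is identical to the paper's. The difference lies in how you establish $q*p\in\EGen{G}{M}$.

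The paper does not check directly that every formula in $q*p$ is generic. Instead it invokes Fact~\ref{FactFSGGenerics}(2): writing $q=r^M$ for a global generic $r$, the right stabilizer of $r$ is $G^{00}$, so $q*p$ depends only on the $G^{00}$-coset of a realization $b\models p$. One then replaces $b$ by some $b'$ in the same coset realizing a \emph{generic} type $p'$, obtaining $q*p=q*p'$; since $\EGen{G}{M}$ is a (left) ideal, $q*p'$ is generic. Your route avoids $G^{00}$ and the stabilizer description entirely, reducing instead to the elementary observation that a right translate of a left-generic set is left-generic, together with Fact~\ref{FactGenericLR} applied to a global coheir of $q$. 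This is more self-contained and uses less of the $fsg$ structure theory; the paper's approach, in exchange, yields the slightly stronger intermediate statement $q*p=q*p'$ for some generic $p'$, though this is not needed for the lemma. One small imprecision: when you write ``the coheir of $q$ is a global type'', you really mean a global coheir extension of $q$ (equivalently, any global generic with $M$-trace $q$), which then restricts to the unique coheir over $M^{ext}b$ and hence contains $\psi$; the argument goes through once this is said.
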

\begin{proof}
	Write $q = r^M$ for some global generic $r$ and assume $q \in u * \EGen{G}{M}$ for some idempotent $u$. Let $b \models p$ and $a \models r^N$ for some $N \succ M$ containing $b$. Then $a\cdot b \models q*p$. Since $G^{00}(N) = Stab_R(r^N)$, we have $a \cdot b \equiv_M a \cdot b'$ for any $b'$ with $b'/G^{00}(N) = b/G^{00}(N)$ provided that $\tp(a/Mb')$ is finitely satisfiable. In particular $b'$ can be found satisfying a generic type. Thus $q * p = q * p' = u * q * p'$ for some generic $p'$ and so $q * p \in u * \EGen{G}{M}$.
\end{proof}

\begin{corollary}
	The flow $\EGen{G}{M}$ is a two-sided ideal of $\Sext{G}{M}$.
\end{corollary}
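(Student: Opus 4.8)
The plan is to verify the two ideal conditions separately, namely that $\EGen{G}{M} * \Sext{G}{M} \subseteq \EGen{G}{M}$ (the right ideal property) and that $\Sext{G}{M} * \EGen{G}{M} \subseteq \EGen{G}{M}$ (the left ideal property), and then to combine them. The first of these is essentially free: for $q \in \EGen{G}{M}$ and any $p \in \Sext{G}{M}$, Lemma \ref{LemmaSameIdealSG} already asserts that $q * p$ is again generic, i.e. lies in $\EGen{G}{M}$. So $\EGen{G}{M}$ is a right ideal with no additional work.

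For the left ideal property I would exploit that $\EGen{G}{M}$ is a subflow, which is precisely the content of Proposition \ref{PropGenericUnique} (it is the unique minimal subflow). Concretely, the set of generic types is closed, being the intersection of the clopen sets $\{p : \phi \notin p\}$ over all non-generic $\phi$, and it is $G(M)$-invariant since a translate of a generic type is generic. Now fix $q \in \EGen{G}{M}$ and consider the map $p \mapsto p * q$, which is continuous because the semigroup operation on $\Sext{G}{M}$ is continuous in its first coordinate. On a principal ultrafilter $\delta_g$ with $g \in G(M)$ we have $\delta_g * q = g \cdot q \in \EGen{G}{M}$ by $G$-invariance. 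Since the principal ultrafilters are dense in $\Sext{G}{M}$ and $\EGen{G}{M}$ is closed, the preimage $\{p : p * q \in \EGen{G}{M}\}$ is a closed set containing a dense subset, hence all of $\Sext{G}{M}$. Letting $q$ range over $\EGen{G}{M}$ yields $\Sext{G}{M} * \EGen{G}{M} \subseteq \EGen{G}{M}$.

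Putting the two containments together shows that $\EGen{G}{M}$ is a two-sided ideal. I do not expect a genuine obstacle here: the right ideal direction is handed to us directly by Lemma \ref{LemmaSameIdealSG}, while the left ideal direction is the standard fact that a closed $G$-invariant subset of a point-transitive flow identified with its own Ellis semigroup is automatically a left ideal. The only point requiring a little care is bookkeeping the left/right conventions, i.e. confirming that the lemma supplies exactly the direction that the subflow argument does not cover, so that the two together exhaust both sides.
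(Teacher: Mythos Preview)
Your proposal is correct and matches the paper's implicit reasoning: the paper states this as an immediate corollary without proof, relying on Lemma~\ref{LemmaSameIdealSG} for the right ideal direction and on Proposition~\ref{PropGenericUnique} together with the general fact (recalled in the preliminaries) that minimal subflows of an Ellis semigroup are minimal left ideals for the other direction. Your density-plus-continuity argument for the left ideal property simply unpacks that general fact, so the two approaches coincide.
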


\subsection{Definable extreme amenability}
We now turn to discuss definably extremely amenable groups. We will use the following definition:
\begin{definition}
	A group $G$ is definably extremely amenable if it is definably amenable witnessed by a Keisler measure with the image $\{0,1\}$.
\end{definition}
It is easy to see that measures as in the definition correspond to complete global $G$-invariant types in $S_G(\GC)$. Thus:
\begin{fact}
	Let $G$ be definably extremely amenable. Then there is a $G$-invariant type $p \in S_G(\GC)$. Moreover, for any such type and any model $M$, the restriction $p \restriction M \in S_G(M)$ is $G(M)$-invariant.
\end{fact}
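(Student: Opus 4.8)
The plan is to treat the two assertions separately: the existence of a $G$-invariant global type follows immediately from the ultrafilter--type correspondence recorded just before the statement, while the restriction claim reduces to checking that left translation by an $M$-point commutes with restriction to $M$.

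For existence, I would invoke the remark preceding the statement. A $\{0,1\}$-valued finitely additive probability measure $\mu$ on the algebra of definable subsets of $G$ over $\GC$ is exactly the characteristic function of an ultrafilter on that Boolean algebra, and such an ultrafilter is precisely a complete type $p \in S_G(\GC)$ via $p = \{X : \mu(X) = 1\}$. The hypothesis that $\mu$ is invariant under the group action reads $\mu(g \cdot X) = \mu(X)$ for every definable $X$ and every $g \in G(\GC)$; under the correspondence this says exactly that $g \cdot p = p$ for all $g$, i.e.\ that $p$ is $G$-invariant. So definable extreme amenability hands us such a $p$ directly.

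For the restriction statement, fix any $G$-invariant global $p$ and any model $M$. Since $G(M) \subseteq G(\GC)$, global invariance already gives $g \cdot p = p$ for every $g \in G(M)$, so it suffices to show that for such $g$ translating by $g$ commutes with restricting to $M$, namely $g \cdot (p \restriction M) = (g \cdot p)\restriction M$. This is precisely where the hypothesis $g \in G(M)$ enters: for a formula $\phi(x)$ over $M$ the translate $\phi(g \cdot x)$ is again a formula over $M$ because $g$ is a tuple from $M$, so $\phi(x) \in g \cdot (p \restriction M)$ iff $\phi(g \cdot x) \in p \restriction M$ iff $\phi(g\cdot x) \in p$ iff $\phi(x) \in g \cdot p$ iff $\phi(x) \in (g\cdot p)\restriction M$. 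Chaining this with $g \cdot p = p$ yields $g \cdot (p \restriction M) = (g\cdot p)\restriction M = p \restriction M$, which is the desired $G(M)$-invariance.

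The only genuinely delicate point is this commutation of translation with restriction, and it is delicate only because one must use that $g$ lies in $M$; for a translate by a point outside $M$ the formula $\phi(g \cdot x)$ would no longer be over $M$ and the argument would fail, which is exactly why one obtains invariance under $G(M)$ rather than under all of $G(\GC)$. Everything else is bookkeeping with the ultrafilter/type dictionary.
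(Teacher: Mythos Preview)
Your proposal is correct and matches the paper's treatment: the paper does not give a proof at all, merely remarking beforehand that $\{0,1\}$-valued invariant Keisler measures correspond to global $G$-invariant types and stating the fact as immediate. Your write-up is precisely the unpacking of that correspondence together with the elementary observation that left translation by an $M$-point commutes with restriction to $M$, which is all that is needed.
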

We first note that the property of having an invariant type is preserved when passing to $\mathcal{L}_{ext,M}$. By \cite{CPS}:
\begin{proposition}
	\label{PropExtInvariant}
	Let $G$ be definable and $M$ a model of a $NIP$ theory. Let $p \in S_G(M)$ be $G(M)$-invariant. Then there is a $G(M)$-invariant $p' \in \Sext{G}{M}$ extending $p$.
\end{proposition}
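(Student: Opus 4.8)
The plan is to recast the statement in dynamical terms and then lift a flow fixed point along the reduct map. First I would record the following reformulation. Since the semigroup operation $*$ on $\Sext{G}{M}$ is continuous in its first coordinate and the principal types $\{\delta_g : g \in G(M)\}$ are dense, a type $r \in \Sext{G}{M}$ satisfies $g \cdot r = r$ for all $g \in G(M)$ if and only if $q * r = r$ for every $q \in \Sext{G}{M}$; that is, the $G(M)$-invariant external types are exactly the right zeros of $(\Sext{G}{M},*)$, equivalently the fixed points of the flow. The same remark applies verbatim to the $\mathcal{L}$-flow $S_G(M)$, so the hypothesis says precisely that $p$ is a fixed point of $S_G(M)$. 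Writing $\rho : \Sext{G}{M} \rightarrow S_G(M)$ for the continuous, $G(M)$-equivariant reduct map, the fibre $C := \rho^{-1}(p)$ is then a nonempty closed $G(M)$-invariant set, i.e. a subflow of $\Sext{G}{M}$, and the whole problem becomes: produce a $G(M)$-fixed point inside $C$.

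Next I would reduce the construction to a single choice of global $\mathcal{L}$-type. Given any global $\mathcal{L}$-type $\tilde p \in S_G(\GC)$ extending $p$ and finitely satisfiable in $M$ (a coheir of $p$, which always exists), its externalisation $\tilde p^M = \{\phi(M) : \phi \in \tilde p\}$ is an external type lying in $C$, so in particular $C \neq \emptyset$. The key point is that externalisation commutes with left translation by standard elements: for $g \in G(M)$ one has $g \cdot (\tilde p^M) = (g \cdot \tilde p)^M$, since $(g\cdot\phi)(M) = g\cdot\phi(M)$. Hence if $\tilde p$ can be chosen so that $g \cdot \tilde p = \tilde p$ for every $g \in G(M)$, then $p' := \tilde p^M$ is automatically a $G(M)$-invariant external type extending $p$, and we are done. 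So it suffices to find a global $\mathcal{L}$-coheir of $p$ that is invariant under left translation by $G(M)$.

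The hard part is exactly this last existence statement: although the set $K \subseteq S_G(\GC)$ of global coheirs of $p$ is closed and $G(M)$-invariant, a $G(M)$-fixed point in $K$ is not handed to us by abstract amenability, and an arbitrary coheir need not externalise to a $G(M)$-invariant type. This is where I would invoke the $NIP$ technology underlying \cite{CPS}. The inputs are Shelah's theorem (quantifier elimination and definability of types, hence uniqueness of coheirs, over $M^{ext}$) together with the approximation of externally definable subsets of $G$ by $\mathcal{L}(M)$-definable sets. Concretely, I would use the fixed $G(M)$-invariant $p$ to assign to each externally definable $X \subseteq G$ a membership value via its $\mathcal{L}(M)$-definable approximations and the decision already made by $p$; because left translation by $g \in G(M)$ carries external sets and their approximations equivariantly and $p$ is $G(M)$-invariant, this assignment is $G(M)$-invariant, while $NIP$ (the squeeze from honest definitions together with uniqueness of coheirs over $M^{ext}$) guarantees that it is a consistent complete external type rather than a merely finitely additive object. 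Equivalently, this produces the $G(M)$-invariant global coheir $\tilde p$ sought in the previous paragraph. Checking that the resulting $p'$ is complete, extends $p$, and satisfies $g \cdot p' = p'$ is then routine; the genuine work lies entirely in the approximation and consistency step, which is precisely the preservation of invariant types under the Shelah expansion established in \cite{CPS}.
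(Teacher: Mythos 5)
The first thing to note is that the paper does not prove this proposition at all: it is introduced with the preface ``By \cite{CPS}:'' and imported as a black box from Chernikov--Pillay--Simon, so there is no internal argument to compare yours against. The only meaningful question is whether your attempt constitutes an independent proof. Your preliminary reductions are correct: $G(M)$-invariant external types are exactly the fixed points (equivalently, right zeros) of $(\Sext{G}{M},*)$; the fibre of the restriction map over $p$ is a nonempty, closed, $G(M)$-invariant set; and externalization commutes with translation by elements of $G(M)$, so it suffices to produce a $G(M)$-invariant global coheir $\tilde p$ of $p$ and set $p' = \tilde p^M$.

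The problem is that this last existence statement is not a weaker or more tractable statement --- under $NIP$ it is \emph{equivalent} to the proposition itself (given a $G(M)$-invariant $p' \in \Sext{G}{M}$, its unique global coheir over $M^{ext}$ is $G(M)$-invariant by uniqueness of coheirs, and the converse is your externalization remark). And it is precisely at this point that you stop arguing and start citing: you sketch an assignment of membership values to externally definable sets via definable approximations, assert that honest definitions and $NIP$ make this a consistent complete type, and then state that this step ``is precisely the preservation of invariant types under the Shelah expansion established in \cite{CPS}'' --- which is the statement under proof. None of the genuine $NIP$ content is carried out: why the definable approximations squeeze each externally definable set tightly enough for $p$ to decide it unambiguously, why the resulting assignment is finitely consistent and complete, and why $G(M)$-invariance survives the construction. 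So as a standalone proof the attempt is circular at its only substantive step; as a use of the literature it coincides with the paper, which likewise simply outsources the proposition to \cite{CPS}.
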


By for example \cite{Jag}:
\begin{lemma}
	\label{LemmaInvHeir}
	Assume that $p \in S_G(M)$ is a $G(M)$-invariant definable type. Then for any $N \succ M$, the unique heir $p|N$ is $G(N)$-invariant. In particular, the global heir of p is $G$-invariant.
\end{lemma}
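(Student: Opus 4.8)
The plan is to work directly with the defining scheme of $p$ and to transfer invariance from $M$ to $N$ by elementarity. First I would fix a convention for the left action on types: for $g \in G$ and a type $q$, write $g \cdot q$ for the type of $g \cdot a$ where $a \models q$, so that a formula $\phi(x;b)$ lies in $g \cdot q$ exactly when $\phi(g \cdot x;b) \in q$. With this convention, $G(M)$-invariance of $p$ is the assertion that for every $\mathcal{L}$-formula $\phi(x;y)$, every $g \in G(M)$ and every $b \in M$ one has $\phi(x;b) \in p \iff \phi(g \cdot x; b) \in p$.

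Next I would invoke definability of $p$. For each $\phi(x;y)$ let $d_p\phi(y)$ be its $\phi$-definition over $M$, so that $\phi(x;b) \in p \iff M \models d_p\phi(b)$ for $b \in M$. I would then introduce the auxiliary $\mathcal{L}$-formula $\phi^{*}(x;z,y) := \phi(z \cdot x; y)$, in which $z$ ranges over the group, and let $d_p\phi^{*}(z,y)$ be its $\phi^{*}$-definition, which exists again by definability of $p$. The key point is that the unique heir $p|N$ is governed by the very same scheme: for $b \in N$ we have $\phi(x;b) \in p|N \iff N \models d_p\phi(b)$, and likewise for $\phi^{*}$.

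The central step is to package invariance as a single transferable first-order statement. For fixed $\phi$, the invariance condition rewrites as: for all $g \in G(M)$ and $b \in M$, $M \models d_p\phi(b) \leftrightarrow d_p\phi^{*}(g,b)$; equivalently, $M$ satisfies the $\mathcal{L}$-sentence with parameters from $M$
$$\theta_\phi : \quad \forall z\,\forall y\,\Big(z \in G \rightarrow \big(d_p\phi(y) \leftrightarrow d_p\phi^{*}(z,y)\big)\Big).$$
Since $M \prec N$, the sentence $\theta_\phi$ holds in $N$ as well. Reading it back through the heir scheme yields, for every $g \in G(N)$ and $b \in N$, the equivalence $\phi(x;b) \in p|N \iff \phi(g \cdot x; b) \in p|N$. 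As $\phi$ was arbitrary, every $g \in G(N)$ fixes $p|N$, so $p|N$ is $G(N)$-invariant; taking $N = \GC$ gives the global heir and its $G$-invariance.

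The main obstacle I anticipate is purely bookkeeping rather than conceptual: one must check carefully that $\phi^{*}(x;z,y) = \phi(z \cdot x; y)$ genuinely admits a $\phi^{*}$-definition over $M$ (immediate from applying definability of $p$ to $\phi^{*}$) and that both $p$ and its heir $p|N$ are controlled by the same pair $d_p\phi, d_p\phi^{*}$, so that the single sentence $\theta_\phi$ simultaneously encodes invariance over $M$ and over $N$. Once this setup is in place, elementarity does all the work, and no input beyond definability of $p$ is needed.
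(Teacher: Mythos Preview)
Your argument is correct and is exactly the standard proof: encode $G(M)$-invariance of the definable type $p$ as a first-order property of its defining scheme, and transfer to $N$ by elementarity. The bookkeeping you flag (that $\phi^{*}(x;z,y)=\phi(z\cdot x;y)$ is again an $\mathcal{L}$-formula, hence has a $d_p$-definition, and that $p|N$ is controlled by the same scheme) is routine and poses no difficulty.

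There is nothing to compare here: the paper does not supply its own proof of this lemma but simply cites it from \cite{Jag}. Your write-up is a self-contained proof of the cited fact, and it is the expected one.
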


\begin{corollary}
	Let $G$ be definable and $M$ a model of a $NIP$ theory. Assume that $p \in \Sext{G}{M}$ is $G(M)$-invariant. Then for any $M \prec^* N$ the type $p|N$ (the heir of $p$ over $N$ in $\mathcal{L}_{ext,M}$) is $G(N)$-invariant and extends to a $G(N)$-invariant $p' \in \Sext{G}{N}$.
\end{corollary}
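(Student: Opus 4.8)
The plan is to establish the two assertions in turn, working first inside the Shelah expansion to produce the heir and then pushing it out to a genuine external type over $N$. For the invariance of $p|N$ I would work throughout in the language $\mathcal{L}_{ext,M}$: since $T$ has $NIP$ so does $Th(M^{ext})$, and by Shelah's proposition every type over $M^{ext}$ is definable, so $p$ is a definable, $G(M)$-invariant type of the $NIP$ structure $M^{ext}$. By the definition of $M \prec^* N$, the structure $N$ read in $\mathcal{L}_{ext,M}$ is the elementary extension $N' \succ M^{ext}$, and $p|N$ is exactly the unique heir of $p$ over $N'$. Applying Lemma \ref{LemmaInvHeir} to $p$ inside $Th(M^{ext})$ then yields that $p|N$ is $G(N)$-invariant; its ``in particular'' clause also provides a global $G$-invariant heir $\tilde p$ of $p$ (over a monster model of $Th(M^{ext})$), which I keep for later.

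For the second assertion I want a $G(N)$-invariant $p' \in \Sext{G}{N} = S_G(N^{ext})$ whose reduct to $\mathcal{L}_{ext,M}$ is $p|N$. The cleanest route is to remain inside the single $NIP$ theory $Th(M^{ext})$ and apply Proposition \ref{PropExtInvariant} there, to the model $N'$ and the $G(N')$-invariant complete type $p|N$. This produces a $G(N')$-invariant external type $\bar p$ over the Shelah expansion of $N'$ formed relative to $\mathcal{L}_{ext,M}$; by construction $\bar p$ extends $p|N$ and is $G(N)$-invariant.

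The hard part is then the bookkeeping between the two languages: a priori $\bar p$ lives over the Shelah expansion of the $\mathcal{L}_{ext,M}$-structure $N'$, whereas $\Sext{G}{N}$ consists of external types of $N$ in the original language $\mathcal{L}$. To identify the two I would check that these two families of externally definable subsets of $N$ coincide; for the predicates $D_X$ with $X \in Def_{ext}(M)$ this is immediate, since $D_X(N') = X(N)$ is already $\mathcal{L}$-externally definable over $N$, and the general case follows by a short computation using quantifier elimination in $Th(M^{ext})$ (this is the idempotency of the Shelah expansion for $NIP$ theories). Under this identification $\bar p$ is the desired $p'$. I expect this reconciliation to be the main obstacle; a more hands-on alternative is to re-run the construction of Proposition \ref{PropExtInvariant} with the specific global type $\tilde q := \restr{\tilde p}{\mathcal{L}}$, declaring $D_Y \in p'$ iff $\phi(x,d) \in \tilde q$ for $Y = \phi(N,d) \in Def_{ext}(N)$, in which case one must verify directly the compatibility $D_X \in p|N \iff D_{X(N)} \in p'$ --- precisely the content that the idempotency statement packages away.
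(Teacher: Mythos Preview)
Your proposal is correct and matches the paper's intended argument: the paper gives no explicit proof of this corollary, treating it as an immediate combination of Lemma~\ref{LemmaInvHeir} (applied inside $Th(M^{ext})$ to obtain invariance of the heir $p|N$) and Proposition~\ref{PropExtInvariant} (to lift $p|N$ to an invariant external type over $N$). You carry out exactly this plan.

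Where you go beyond the paper is in flagging the language bookkeeping: applying Proposition~\ref{PropExtInvariant} to the $\mathcal{L}_{ext,M}$-structure $N'$ produces an invariant type over the Shelah expansion of $N'$ formed in $\mathcal{L}_{ext,M}$, whereas the conclusion asks for a type in $\Sext{G}{N}$, the Shelah expansion of $N$ formed in $\mathcal{L}$. The paper silently identifies these; you correctly isolate this as the one nontrivial point and indicate how to resolve it (via quantifier elimination in $Th(M^{ext})$ and the idempotency of the Shelah expansion under $NIP$, so that the externally $\mathcal{L}_{ext,M}$-definable and externally $\mathcal{L}$-definable subsets of $N$ coincide). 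Your alternative hands-on route through the global heir $\tilde p$ is also sound and is essentially how one would re-prove Proposition~\ref{PropExtInvariant} in this setting. Either way, your account is more careful than the paper's, not different from it.
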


Topological dynamics for definably extremely amenable groups is straightforward, as any $G(M)$-invariant type forms a one-point minimal flow that is its own unique ideal subgroup.

\section{Groups with normal definably extremely amenable subgroup}
\label{SectionDEA}
Assume that we work in a $NIP$ theory. Let $G$ be a definable group and $H \lhd G$ a definable normal subgroup. We will show that topological dynamics of $G$ is fully explained by dynamics of the quotient $G/H$.

Let $M$ be a model and assume that there is an $H(M)$-invariant type in $S_G(M)$. By Proposition \ref{PropExtInvariant} we may assume there is an $H(M)$-invariant external type $p \in \Sext{G}{M}$. For the remainder of the section, we fix $M$, $G$, $H$ and $p$.

The canonical quotient map $\pi_H : G \rightarrow G/H$ naturally extends to a map from the space of (external) types in $G$ over $M$ to the space of (external) types in ${G/H}$ over $M$. For a type $q$, write $q/H$ for its projection.

\begin{lemma} With the notation above,
	\begin{enumerate}[(i)]
		\item Let $q \in \Sext{G}{M}$. Then the type $q * p$ depends only on $q/H$.
		\item The set $\Sext{G}{M} * p$ is a subflow of $\Sext{G}{M}$ isomorphic to $\Sext{G/N}{M}$ via the projection map.
		\item There is a minimal subflow of $\Sext{G}{M} * p$ that projects isomorphically to a minimal subflow of $\Sext{G/N}{M}$.
	\end{enumerate}
\end{lemma}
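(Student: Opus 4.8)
The plan is to obtain (ii) and (iii) as essentially formal consequences of (i), so the technical core is (i). The underlying idea for (i) is that in forming $q*p$ one may replace a realization of $q$ by any other realization with the same $H$-coset without changing the product, because the unique global heir $\tilde p$ of $p$ is $H$-invariant by the corollary following Lemma~\ref{LemmaInvHeir}.

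\emph{Part (i).} Assume $q_1/H=q_2/H$ and fix $a_1\models q_1$. Since $\pi_H(a_1)\models q_1/H=q_2/H$, a fibering argument shows $q_2\cup\{\pi_H(x)=\pi_H(a_1)\}$ is consistent: for each $\phi\in q_2$ the formula $\exists x\,(\phi(x)\wedge \pi_H(x)=\bar y)$ lies in $q_2/H$ (it is implied by $\phi$ through $\pi_H$), hence holds at $\bar y=\pi_H(a_1)$. So I may take $a_2\models q_2$ with $\pi_H(a_2)=\pi_H(a_1)$; put $h:=a_1^{-1}a_2\in H$, so that $a_2=a_1h$ and $h\in\dcl(Ma_1a_2)$. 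Now realize $b\models p|a_1a_2$; its restriction to $Ma_1$ is $p|a_1$, so $\tp(a_1 b/M^{ext})=q_1*p$. The decisive step is that, since $\tilde p$ is left $H$-invariant and $h\in\dcl(Ma_1a_2)\cap H$, one has $h^{-1}b\equiv_{Ma_1a_2}b$; consequently $h^{-1}b\models p|a_1a_2$, hence $h^{-1}b\models p|a_2$, and therefore $\tp(a_2\cdot h^{-1}b/M^{ext})=q_2*p$. Since $a_2\cdot h^{-1}b=a_1hh^{-1}b=a_1 b$, I conclude $q_1*p=\tp(a_1 b/M^{ext})=q_2*p$.

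\emph{Parts (ii) and (iii).} Write $S:=\Sext{G}{M}*p$. As $*$ is continuous in the first coordinate, $S$ is the continuous image of the compact space $\Sext{G}{M}$ under $q\mapsto q*p$, hence closed; and $S$ is a left ideal, since $r*(q*p)=(r*q)*p$, so in particular it is $G(M)$-invariant and thus a subflow. For the isomorphism I would show that the equivariant continuous projection $\pi|_S:S\to\Sext{G/H}{M}$ is a bijection, as a continuous equivariant bijection between compact Hausdorff $G(M)$-flows is automatically a flow isomorphism. Here I would use that $p$ concentrates on $H$, so that $p/H$ is the identity coset and $(q*p)/H=(q/H)*(p/H)=q/H$. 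Surjectivity is then immediate (lift $\bar q$ and note $q*p\mapsto q/H=\bar q$), while injectivity follows from (i): if $(q_1*p)/H=(q_2*p)/H$ then $q_1/H=q_2/H$, whence $q_1*p=q_2*p$. Finally (iii) is a corollary: $S$, being a nonempty compact flow, contains a minimal subflow $\mathcal M$ by Zorn's lemma, and since $\pi|_S$ is a flow isomorphism, $\pi(\mathcal M)$ is a minimal subflow of $\Sext{G/H}{M}$ onto which $\mathcal M$ projects isomorphically.

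The main obstacle is the invariance slide in (i): choosing $a_2\models q_2$ inside the coset $a_1H$ and, above all, verifying $h^{-1}b\equiv_{Ma_1a_2}b$, which requires care with heirs over the enlarged parameter set $Ma_1a_2$ and with the precise (left) $H$-invariance of the global heir $\tilde p$. The other point deserving attention is the identity $(q*p)/H=q/H$ used in (ii), i.e. that $p$ maps to the identity of $G/H$; this is where the choice of $p$ as an invariant type supported on $H$ (as furnished by the extreme amenability of $H$) is genuinely needed, since for a general $H$-invariant $p$ the projection $\pi|_S$ need not be injective.
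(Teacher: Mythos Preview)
Your proof is correct and takes essentially the same approach as the paper. Part (i) is identical in substance: both arguments amount to showing $\tp(ah/M^{ext})*p=\tp(a/M^{ext})*p$ by invoking the $H$-invariance of the heir of $p$ (the paper writes $hh'\models p|a,h$, you write $h^{-1}b\models p|a_1a_2$; this is the same move).

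In (ii) there is only a small difference of emphasis: the paper computes $(q_1*p)*(q_2*p)=(q_1*q_2)*p$ directly, using normality of $H$, thereby exhibiting the map $q/H\mapsto q*p$ as a semigroup homomorphism, whereas you verify that the projection $\pi|_S$ is a continuous equivariant bijection and hence a flow isomorphism. Your remark that bijectivity needs $p$ to be concentrated on $H$ is correct and is in fact required for the paper's argument as well (it is what makes the projection invert $q/H\mapsto q*p$), though the paper leaves this implicit from the context of $H$ being definably extremely amenable. Part (iii) is formal in both treatments.
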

\begin{proof}
	(i) Let $a \in G$ and $h \in H$. If $h' \models p|a,h$, then $ahh' \models \tp(ah/M) * p$. Since $hh' \models p|a,h$ by Lemma \ref{LemmaInvHeir}, we also have $ahh' \models \tp(a/M) * p$.
	
	(ii) Clearly $S_G(M) * p$ is an ideal of $\Sext{G}{M}$. Let $q_1, q_2 \in G$ and $a \models q_1$, $h \models p|a$, $b \models q_2|a,h$ and $h' \models p|a,h,b$. Then $ahbh' \models q_1 * p * q_2 * p$, but $ahbh' = abh''h'$ for some $h'' \in H$ since $N$ is normal in $G$. As $h''h' \models p|a,b,h$, we have $abh''h \models q_1 * q_2 * p$ as needed.
	
	(iii) follows from (ii).
\end{proof}

Whenever a minimal flow projects isomorphically onto a minimal flow, idempotents map to idempotents and their associated ideal subgroups also map isomorphically. As a corollary we obtain
\begin{proposition}
	\label{PropDEAIsom}
	Let $H \lhd G$ be definably extremely amenable. Then the Ellis groups of $(G(M),\Sext{G}{M})$ and $(G(M)/H(M),\Sext{G/H}{M})$ are isomorphic.
\end{proposition}

\section{$fsg$-definably extremely amenable decomposition}
In this section we consider the case of a group that properly decomposes into $fsg$ and definably extremely amenable subgroups. The motivation for this decomposition comes from the theory of definable Lie groups and represent a certain abstract definable version of Iwasawa decomposition. We will make this connection more clear in a later section.
All throughout this section, we assume to work in a $NIP$ theory. We begin with a suitable ad hoc definition.
\begin{definition}
	\label{DefinitionGood}
	Let $G$ be $M$-definable. We say that $G$ has a good decomposition if there are $M$-definable subgroups $K, H < G$ such that:
	\begin{enumerate}
		\item $G = KH$ and $F \cap H = \{1_G\}$.
		\item $K$ has $fsg$.
		\item $H$ is definably extremely amenable.
	\end{enumerate}
	In this case we will also say that $G=KH$ is a good decomposition.
\end{definition}

\begin{remark}
The condition (1) in the above definition is saying precisely that $G=KH$ is a Zappa-Sz\'ep decomposition.
\end{remark}
We will aim to describe the topological dynamics of $G$ admitting a good decomposition. Since we do not assume that the definably extremely amenable subgroup is normal, there is no straightforward reduction to dynamics of the quotient.

Assume that $(G,\cdot)$ is $M$-definable group in a $NIP$ theory and that $G=KH$ is a definable decomposition with $K \cap H = \{1_G\}$. The description of dynamical objects in $\Sext{G}{M}$ will involve a natural action of $H$ on $K$ induced by the decomposition. Let $g \in G$ be any element. Since the intersection of $K$ and $H$ is trivial, $g$ can be uniquely written as a product $kh$ with $k \in K, h \in H$, and likewise as a product $h'k'$ with $h' \in H, k' \in K$. The pairs $(k,h), (h',k')$ and $g$ are all interdefinable. Define an action of $H$ on $K$ as follows:
$$h \cdot_1 k = k' \iff hk = k'h' \text{ for some } h' \in H.$$
A direct computation shows this action is well-defined.

The action $\cdot_1$ lifts to an action $*_1$ of the semigroup of types $\Sext{H}{M}$ on the space of types $\Sext{K}{M}$ in the following way:
$$p *_1 q = \tp(h \cdot k/M)$$
for $h \models p, k \models q|h$. The fact this is a well-defined semigroup action follows from a more general variant of definable topological dynamics that we omitted in the introduction. We will instead show it directly.
\begin{lemma}
	Let $p',p \in \Sext{H}{M}$ and $q \in \Sext{K}{M}$. Then $p' *_1 (p *_1 q) = (p' * p) *_1 q.$
\end{lemma}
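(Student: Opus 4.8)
The plan is to verify the identity by exhibiting a single sequence of realizations that computes both sides simultaneously, thereby reducing the equality of types to the fact that $\cdot_1$ is a genuine left action of $H$ on $K$. Concretely, I would fix realizations $(h',h,k) \models p' \otimes p \otimes q$, so that $h' \models p'$, $h \models p|h'$ and $k \models q|h'h$ (all heirs computed in $M^{ext}$, where they are unique since $T$ is $NIP$). I claim that both $p' *_1 (p *_1 q)$ and $(p'*p) *_1 q$ are the $M$-type of the element $h' \cdot_1 (h \cdot_1 k)$, and that this element equals $(h'h)\cdot_1 k$ precisely because $\cdot_1$ is an action.

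For the right-hand side, first note that $h'h \models p' * p$ by the definition of the semigroup operation on $\Sext{H}{M}$. Since $k \models q|h'h$ and the restriction of an heir is again an heir, we also have $k \models q|(h'h)$, where $h'h$ is now read as the single group element obtained by multiplying in $H$. Thus $(h'h,k)$ is exactly a configuration computing $(p'*p)*_1 q$ via the definition of $*_1$, and so $(h'h)\cdot_1 k \models (p'*p)*_1 q$.

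The left-hand side requires a little more care, and this is where I expect the main obstacle to be: to apply the definition of $p' *_1 (\cdot)$ I must check that $h \cdot_1 k$ realizes the heir $(p *_1 q)|h'$, not merely $p *_1 q$ itself. For this I would use that $p *_1 q = f_*(p \otimes q)$ is the pushforward of $p \otimes q$ under the $M$-definable map $f(x,y) = x \cdot_1 y$, together with the standard fact that heirs commute with $M$-definable pushforwards: if $\bar{a} \models s|B$ then $f(\bar{a}) \models (f_* s)|B$. Applying this with $s = p \otimes q$ and $B = Mh'$, and observing that $(h,k) \models (p \otimes q)|h'$ — which unwinds to exactly $h \models p|h'$ together with $k \models q|h'h$ — I obtain $h \cdot_1 k \models (p *_1 q)|h'$. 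Combined with $h' \models p'$, the definition of $*_1$ then yields $h' \cdot_1 (h \cdot_1 k) \models p' *_1 (p *_1 q)$.

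Finally, a direct computation using the uniqueness of the $KH$-factorization shows that $\cdot_1$ is a left action, so in particular $h' \cdot_1 (h \cdot_1 k) = (h'h) \cdot_1 k$. This identifies the two realizations, whence the two $M$-types coincide and the lemma follows. The only nonroutine ingredient is the heir-pushforward bookkeeping of the third paragraph; everything else is a matter of correctly tracking over which element each type is taken to be an heir.
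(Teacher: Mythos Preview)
Your proof is correct and follows essentially the same approach as the paper: pick $(h',h,k) \models p' \otimes p \otimes q$ and observe that $h' \cdot_1 (h \cdot_1 k) = (h'h) \cdot_1 k$ realizes both sides. The paper's proof is a one-line statement of exactly this observation, while you have carefully justified the one nontrivial point it leaves implicit, namely that $h \cdot_1 k \models (p *_1 q)|h'$ via the heir-pushforward compatibility.
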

\begin{proof}
	Simply observe that if $(h',h,k) \models p' \otimes p \otimes q$, then $h' \cdot_1 (h \cdot_1 k) \models p' *_1 (p *_1 q)$ and also $(h' \cdot h) \cdot_1 k \models (p' * p) *_1 q$.
\end{proof}

We will also use the following:
\begin{lemma}
	\label{LemmaTypeP}
	Assume that $p \in \Sext{H}{M}$ is an $(M)$-invariant type. Then for any $q \in \Sext{K}{M}, p' \in \Sext{H}{M}$, we have $p' * q * p = (p' *_1 q) * p$.
\end{lemma}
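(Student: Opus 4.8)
The plan is to realize both sides of the identity by explicit group elements coming from a single tensor-product tuple and then to compare them using the Zappa--Sz\'ep rewriting encoded by $\cdot_1$. Pick $(h',k,h) \models p' \otimes q \otimes p$, so that $h' \models p'$, $k \models q|h'$ and $h \models p|h'k$; by the definition of the semigroup operation, $h'kh \models p'*q*p$. Since $K \cap H = \{1_G\}$, the element $h'k$ factors uniquely as $h'k = k_0 h'''$ with $k_0 = h' \cdot_1 k \in K$ and $h''' \in H$, these being interdefinable with $(h',k)$. Substituting, $h'kh = k_0\,(h''' h)$, and the whole problem reduces to identifying $\tp(k_0(h'''h)/M)$ as $(p' *_1 q)*p$.

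For the first factor nothing is needed beyond the definition of the type action: from $(h',k) \models p' \otimes q$ we read off $k_0 = h'\cdot_1 k \models p' *_1 q =: s$. So, writing $s \in \Sext{K}{M}$, it suffices to check that $(k_0, h'''h) \models s \otimes p$, that is, that the second factor realizes $p$ over the base enlarged by $k_0$, namely $h'''h \models p|k_0$. Granting this, $k_0(h'''h) \models s*p = (p' *_1 q)*p$, and comparison with $h'kh \models p'*q*p$ closes the argument.

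The heart of the matter is therefore the assertion $h'''h \models p|k_0$, and this is exactly where the invariance of $p$ enters. I would argue that, because the Zappa--Sz\'ep factorization is definable, both $k_0$ and $h'''$ lie in $\dcl(Mh'k)$; thus $h \models p|h'k$ may equally be read as $h \models p|Mh'kh'''k_0$. Now $h''' \in H$, and by Lemma \ref{LemmaInvHeir} the global heir of $p$ is $H$-invariant, so left translation by the element $h''' \in H(\dcl(Mh'k))$ fixes that heir. Consequently the translate $h'''h$ again realizes the heir of $p$ over $Mh'k$, i.e.\ $\tp(h'''h/Mh'k) = p|Mh'k$, and restricting to the smaller parameter set $Mk_0$ yields $h'''h \models p|k_0$, as required.

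I expect the main obstacle to be the parameter bookkeeping in this last step rather than any deep idea: one must simultaneously ensure that $h'''$ lies in $\dcl$ of the base over which $h$ realizes the heir (so that translating by $h'''$ is an operation over that base and leaves the heir undisturbed) and that $h''' \in H$ (so that $H$-invariance of the heir applies). Both are consequences of the definability of the decomposition together with the fact that $h'''$ is precisely the $H$-part of $h'k$; once they are in place, the remainder is a direct unwinding of the definitions of $*$, $*_1$ and $\otimes$.
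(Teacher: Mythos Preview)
Your argument is correct and is essentially the paper's own proof, carried out with more explicit bookkeeping: both pick $(h',k,h)\models p'\otimes q\otimes p$, rewrite $h'k=(h'\cdot_1 k)h''$ via the Zappa--Sz\'ep decomposition, and invoke Lemma~\ref{LemmaInvHeir} to obtain $h''h\models p|h'k$, hence $(h'\cdot_1 k)h''h\models (p'*_1 q)*p$. The only difference is that you spell out the $\dcl$-considerations justifying that $h''$ lies over the right base, which the paper leaves implicit.
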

\begin{proof}
	Let $(h',k,h) \models p' \otimes q \otimes p$. Then	$h'kh = (h' \cdot_1 k)h''h$ for some $h'' \in H$. Then by Lemma \ref{LemmaInvHeir} $h''h \models p|h'k$ and so
	$$(h' \cdot_1 k)h''h \models \tp(h' \cdot_1 k/M) * p = (p' *_1 q) * p,$$
	as required.
\end{proof}

We are now ready to approximate a minimal flow in the setting discussed.
\begin{proposition}
	\label{PropDecompFlows1}
	Let $G = KH$ be a definable (not necessarily good) decomposition with $K \cap H = \{1_G\}$ and let $p \in \Sext{G}{M}$ be $H(M)$-invariant. Then there is $I \subset \Sext{K}{M}$ such that $I * p$ is a minimal subflow of $\Sext{G}{M}$.
\end{proposition}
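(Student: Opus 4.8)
The plan is to produce the minimal flow inside the left ideal $\Sext{G}{M} * p$, after first showing that this ideal already coincides with $\Sext{K}{M} * p$. Granting that identity, extracting a minimal subflow and pulling it back to a subset of $\Sext{K}{M}$ is a soft argument; all the real content sits in the identity, which is where the decomposition and the $H(M)$-invariance of $p$ are used.

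First I would verify that $\Sext{G}{M} * p$ is a subflow. Since the semigroup operation is continuous in the first coordinate, the map $q \mapsto q * p$ is continuous, so $\Sext{G}{M} * p$ is a continuous image of a compact space, hence closed. It is a left ideal because $s * (q * p) = (s * q) * p$; as translation by $g \in G(M)$ is given by $\delta_g * (-)$ with $\delta_g \in \Sext{G}{M}$, this ideal is in particular $G(M)$-invariant. Being nonempty, closed and $G(M)$-invariant, it is a subflow. Next comes the crux, the identity $\Sext{G}{M} * p = \Sext{K}{M} * p$. The inclusion $\supseteq$ is immediate from $\Sext{K}{M} \subseteq \Sext{G}{M}$ (types concentrating on the $M$-definable set $K$). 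For $\subseteq$, I take $r \in \Sext{G}{M}$ and a realization $g \models r$, and use the decomposition to write $g = kh$ uniquely with $k \in K$, $h \in H$, noting $k, h \in \dcl(Mg)$. Choosing $g' \models p|g$ gives $gg' \models r * p$. Because $p$ is $H(M)$-invariant, its global heir is $H$-invariant (Lemma \ref{LemmaInvHeir} applied to the subgroup $H$), so since $h \in \dcl(Mg) \cap H$ the pushforward by $x \mapsto hx$ fixes the heir and yields $hg' \models p|g$, hence $hg' \models p|k$ after restricting to $Mk$. Rewriting $gg' = k \cdot (hg')$ then exhibits $gg'$ as a realization of $\tp(k/M) * p$ with $\tp(k/M) \in \Sext{K}{M}$, so $r * p = \tp(k/M) * p \in \Sext{K}{M} * p$.

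With the identity established, I would finish as follows. Every flow contains a minimal subflow, so I pick a minimal subflow $\mathcal{M} \subseteq \Sext{G}{M} * p = \Sext{K}{M} * p$ and set $I = \{q \in \Sext{K}{M} : q * p \in \mathcal{M}\}$. The inclusion $I * p \subseteq \mathcal{M}$ holds by definition of $I$, and conversely every element of $\mathcal{M}$ lies in $\Sext{K}{M} * p$, hence is of the form $q * p$ with $q \in \Sext{K}{M}$, forcing $q \in I$; thus $\mathcal{M} \subseteq I * p$. Therefore $I * p = \mathcal{M}$ is the desired minimal subflow of $\Sext{G}{M}$.

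I expect the main obstacle to be precisely the identity $\Sext{G}{M} * p = \Sext{K}{M} * p$, and within it the normalization step $hg' \models p|k$: the point is to absorb the $H$-component $h$ of an arbitrary realization $g = kh$ into $p$ by invoking $H$-invariance of the heir, thereby replacing $r * p$ by $\tp(k/M) * p$. It is worth remarking that this argument uses neither that $K$ has $fsg$ nor that the decomposition is good, which is consistent with the proposition being stated for an arbitrary definable decomposition with $K \cap H = \{1_G\}$.
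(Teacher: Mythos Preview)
Your proof is correct and follows essentially the same idea as the paper's: both use $H$-invariance of the heir of $p$ (Lemma~\ref{LemmaInvHeir}) to absorb the $H$-component of an element of $G$ into $p$, thereby showing that $\Sext{K}{M}*p$ is a subflow and then extracting a minimal subflow inside it. The only difference is packaging: you establish the equality $\Sext{G}{M}*p = \Sext{K}{M}*p$ (from which closedness and $G(M)$-invariance follow immediately, since the left-hand side is a closed left ideal), whereas the paper checks $G(M)$-invariance of $\Sext{K}{M}*p$ directly via the action $\cdot_1$, computing $kh\cdot(q*p) = \tp\bigl(k\,(h\cdot_1 k')/M\bigr)*p$.
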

\begin{proof}
	It is sufficient to show that the set $\Sext{K}{M} * p$ is a subflow. Let $q * p \in \Sext{K}{M} * p$ and $k \in K(M), h \in H(M)$. Let also $(k',h') \models q \otimes p.$ Then
	$$khk'h' \models kh \cdot (q * p).$$
	We have $khk'h' = k(h \cdot_1 k') \cdot h''h$ for some $h'' \in H$. The triples $(k,h,k')$ and $(k, h \cdot_1 k', k'')$ are interdefinable over $M$, so $h \models p|k, h \cdot_1 k', k''$. By Lemma \ref{LemmaInvHeir}, also $h''h \models p|k, h \cdot_1 k', k''$ and so
	$$khk'h' = k(h \cdot_1 k') \cdot h''h \models \tp(k \cdot (h \cdot_1 k')/M) * p.$$
	Therefore we have
	$$G(M) \cdot q * p \subset \Sext{K}{M} * p.$$
	Since the semigroup operation is continuous on the first coordinate, this also implies
	$$\cl(G(M) \cdot q * p) \subset \Sext{K}{M} * p.$$
	This shows that $\Sext{K}{M} * p$ is closed. The same argument also shows it is $G(M)$-invariant.
\end{proof}

\begin{proposition}
	\label{PropDecompFlows2}
	Let $G = KH$ be a good decomposition and let $p \in \Sext{G}{M}$ be $H(M)$-invariant. Let $I * p$ be a minimal subflow as in Proposition \ref{PropDecompFlows1}. Then $I$ is unique and $\EGen{K}{M} \subset I$.
\end{proposition}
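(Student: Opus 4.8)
The plan is to pin down the unique minimal subflow inside $\Sext{K}{M}*p$ without ever computing the twisted action $\cdot_1$ of $H$ on generic types. The naive route---proving that $\EGen{K}{M}*p$ is itself a minimal subflow by checking it is $G(M)$-invariant---would require understanding how $\cdot_1$ moves generic types of $K$; since this action is not by group automorphisms, that is precisely the step I want to avoid. Instead I would show that \emph{every} minimal subflow contained in $\Sext{K}{M}*p$ already contains $\EGen{K}{M}*p$, and then read off both the uniqueness of $I$ and the inclusion $\EGen{K}{M}\subseteq I$ from this single fact.

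So let $\mathcal J$ be any minimal subflow of $\Sext{G}{M}$ with $\mathcal J\subseteq \Sext{K}{M}*p$; at least one exists by Proposition \ref{PropDecompFlows1} together with compactness, and every point of $\mathcal J$ has the form $r*p$ with $r\in\Sext{K}{M}$. Fix such a point. For $k\in K(M)$, associativity in $G$ gives $k\cdot(r*p)=(k\cdot r)*p$, so $K(M)\cdot(r*p)=(K(M)\cdot r)*p\subseteq \mathcal J$, whence $\cl(K(M)\cdot(r*p))\subseteq \mathcal J$. Now $\cl(K(M)\cdot r)$ is a nonempty closed $K(M)$-invariant subset of $\Sext{K}{M}$, i.e.\ a subflow, so it contains a minimal subflow. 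Since $K$ has $fsg$ it carries a generic external type (Facts \ref{FactFSGGenerics} and \ref{FactGenericLR}), so by Proposition \ref{PropGenericUnique} the minimal subflow of $\Sext{K}{M}$ is unique and equals $\EGen{K}{M}$; hence $\EGen{K}{M}\subseteq \cl(K(M)\cdot r)$. Finally, because the semigroup operation is continuous in the first coordinate, right multiplication $\rho_p\colon x\mapsto x*p$ is continuous, and therefore $\EGen{K}{M}*p=\rho_p(\EGen{K}{M})\subseteq \rho_p(\cl(K(M)\cdot r))\subseteq \cl\big((K(M)\cdot r)*p\big)=\cl(K(M)\cdot(r*p))\subseteq \mathcal J$.

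Thus the fixed nonempty set $\EGen{K}{M}*p$ sits inside every minimal subflow of $\Sext{G}{M}$ contained in $\Sext{K}{M}*p$. Any two such minimal subflows then meet, and being minimal they coincide; so there is a unique minimal subflow $\mathcal I=I*p$ there, which makes $I=\{q\in\Sext{K}{M}:q*p\in\mathcal I\}$ unique. For $q\in\EGen{K}{M}$ we have $q*p\in\EGen{K}{M}*p\subseteq\mathcal I$, i.e.\ $q\in I$, giving $\EGen{K}{M}\subseteq I$ as required. The one genuine obstacle is the temptation to prove $G(M)$-invariance of $\EGen{K}{M}*p$ head-on; the device that removes it is the uniqueness of the minimal flow of the $fsg$ group $K$, which forces the $K(M)$-orbit closure of an arbitrary $r$ to absorb $\EGen{K}{M}$ and hence pushes $\EGen{K}{M}*p$ into every minimal subflow.
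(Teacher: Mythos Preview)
Your proof is correct and follows essentially the same approach as the paper: pick a point $r*p$ in a minimal subflow inside $\Sext{K}{M}*p$, use the uniqueness of the minimal subflow of the $fsg$ group $K$ to get $\EGen{K}{M}\subseteq\cl(K(M)\cdot r)$, and then push this through $\rho_p$ to conclude $\EGen{K}{M}*p\subseteq\mathcal J$, from which both uniqueness of $I$ and the inclusion $\EGen{K}{M}\subseteq I$ follow. Your write-up is more explicit about the continuity and injectivity details, but the idea is identical.
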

\begin{proof}
	First we show that $\EGen{K}{M} \subset I$. Let $q \in I$. Then the closure of the orbit $K(M)q$ contains $\EGen{K}{M}$ by Proposition \ref{PropGenericUnique}. Similarly, the closure of the orbit $K(M) \cdot (q * p) = (K(M) \cdot q) * p$ contains $\EGen{K}{M} * p$. For uniqueness, note that $I * p = \cl(G(M) \cdot \EGen{K}{M} * p)$.
\end{proof}

For the remainder of this section, fix an $M$-definable group $G$ and its good decomposition $G = KH$, as well as an $H(M)$-invariant type $p \in \Sext{H}{M}$. Let $I \subset \Sext{K}{M}$ be the unique set such that $I * p$ is a minimal subflow of $\Sext{G}{M}$. By general topological dynamics, $I * p$ decomposes into the disjoint union of its ideal subgroups. The set $\EGen{K}{M} * p \subset I * p$ is usually not closed. However:
\begin{proposition}
	\label{PropSameSG}
	$\EGen{K}{M} * p$ is a disjoint union of ideal subgroups of $I * p$. Moreover, if $q,q'$ are generic such that $q * p$ and $q' * p$ are in the same ideal subgroup of $I*p$, then $q$ and $q'$ and in the same ideal subgroup of $\EGen{K}{M}$ in the sense of the $K(M)$-flow $\Sext{K}{M}$.
\end{proposition}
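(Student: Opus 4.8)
The plan is to lean on two devices: the twisted multiplication rule that follows from Lemma \ref{LemmaTypeP}, and the definable projection $\kappa:G\to K$ sending $g$ to its $K$-part in the (unique) decomposition $g=\kappa(g)\eta(g)$ with $\kappa(g)\in K$, $\eta(g)\in H$. First I would record, for arbitrary $a,b\in\Sext{K}{M}$, the identity
$$(a*p)*(b*p)=a*(p*b*p)=(a*(p*_1 b))*p,$$
obtained by applying Lemma \ref{LemmaTypeP} (with $p'=p$) to rewrite $p*b*p=(p*_1 b)*p$ and then left–multiplying by $a$. Since $\kappa$ is $M$-definable it induces a continuous pushforward $\kappa_*:\Sext{G}{M}\to\Sext{K}{M}$; realizing $a*p$ by $kh$ with $(k,h)\models a\otimes p$, where $k\in K$ and $h\in H$, one has $\kappa(kh)=k$, so $\kappa_*(a*p)=a$. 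In particular the map $a\mapsto a*p$ is injective on $\Sext{K}{M}$, a fact I will use for the converse direction.

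For the first assertion, set $L:=I*p$, a minimal subflow (i.e.\ a minimal left ideal) of $\Sext{G}{M}$, and $S:=\EGen{K}{M}*p\subseteq L$. The displayed identity gives, for generic $g$ and arbitrary $w\in I$,
$$(g*p)*(w*p)=(g*(p*_1 w))*p.$$
Because $\EGen{K}{M}$ is a two-sided ideal of $\Sext{K}{M}$ (the corollary to Lemma \ref{LemmaSameIdealSG}, applied to the $fsg$ group $K$) and $g\in\EGen{K}{M}$, the factor $g*(p*_1 w)$ is again generic, so $(g*p)*(w*p)\in S$. As every element of $L$ has the form $w*p$ with $w\in I$, this shows $S*L\subseteq S$, i.e.\ $S$ is a right ideal of the minimal left ideal $L$. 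By the standard structure theory of minimal left ideals in a compact right-topological semigroup, the ideal subgroup of $L$ containing a point $x\in L$ is exactly $x*L$; hence for each $x\in S$ the whole ideal subgroup of $x$ equals $x*L\subseteq S*L\subseteq S$. Since $x\in x*L$, this exhibits $S$ as the union — necessarily a disjoint union — of the ideal subgroups of $L$ that it meets, proving the first claim (and $S$ need not be closed, as only some ideal subgroups are hit).

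For the ``moreover'', I would argue as follows. Suppose $q,q'$ are generic with $q*p$ and $q'*p$ in the same ideal subgroup of $L$. Then $q'*p$ lies in the ideal subgroup of $q*p$, which by the above is $(q*p)*L$, so $q'*p=(q*p)*(w*p)$ for some $w\in I$. The twisted identity rewrites this as $q'*p=(q*(p*_1 w))*p$. Setting $\hat q:=q*(p*_1 w)$, Lemma \ref{LemmaSameIdealSG} (with $q\in\EGen{K}{M}$ and $p*_1 w\in\Sext{K}{M}$) shows $\hat q$ is generic and lies in the same ideal subgroup of $\EGen{K}{M}$ as $q$. Finally $q'*p=\hat q*p$, and the injectivity of $a\mapsto a*p$ (witnessed by $\kappa_*$) forces $q'=\hat q$; thus $q'$ and $q$ are in the same ideal subgroup of the $K(M)$-flow $\Sext{K}{M}$, as required.

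The calculations are light, and the main thing to get right is structural rather than computational. In the right-topological semigroup $\Sext{G}{M}$ one cannot freely produce idempotents, so I have deliberately arranged the first part to avoid them, deducing the union-of-ideal-subgroups statement purely from the right-ideal property $S*L\subseteq S$. The two load-bearing inputs are therefore the two-sided ideal property of $\EGen{K}{M}$, which is what makes $S$ a right ideal of $L$, and the definable retraction $\kappa$, whose verification $\kappa_*(a*p)=a$ at the level of external types underpins the injectivity driving the converse direction.
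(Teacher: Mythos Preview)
Your proof is correct and follows essentially the same route as the paper. Both arguments hinge on the twisted product identity $(q*p)*(r*p)=(q*(p*_1 r))*p$ together with Lemma~\ref{LemmaSameIdealSG}, and both use that the ideal subgroup through $q*p$ equals $(q*p)*L$. Your write-up is actually more complete: you invoke Lemma~\ref{LemmaTypeP} rather than rederiving the identity by hand, and you make the injectivity of $a\mapsto a*p$ explicit via the definable retraction $\kappa$, which the paper's terse proof tacitly relies on for the ``moreover'' clause but never states.
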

\begin{proof}
	Let $q \in \EGen{K}{M}$. The type $q * p$ belongs to an ideal subgroup $\mathcal{H} \subset I * p$ and naturally $q * p * \mathcal{H} = \mathcal{H}$. Therefore $\mathcal{H} \subset q * p * I * p.$
	So let $r \in I$ and let $(k,h,k',h') \models q \otimes p \otimes r \otimes p$. Then $khk'h' = k(h \cdot_1 k')h''h$ for some $h'' \in H$. Arguing as in the proof of Proposition \ref{PropDecompFlows1}, we have
	$$k(h \cdot_1 k')h''h \models q * (p *_1 r) * p.$$
	By Lemma \ref{LemmaSameIdealSG}, $q' = q * (p *_1 r)$ is generic and $q,q'$ belong to the same ideal subgroup of $\EGen{K}{M}$.
\end{proof}

Using Lemma \ref{LemmaTypeP} we, give the following description on the semigroup operation in the minimal flow $I * p$:
\begin{fact}
	Let $q, q' \in I$. Then $(q * p) * (q' * p) = q * (p *_1 q') * p$.
\end{fact}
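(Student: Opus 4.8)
The plan is to derive the identity directly from associativity of the semigroup operation on $\Sext{G}{M}$ together with a single application of Lemma \ref{LemmaTypeP}. First I would observe that, via the inclusions $H, K < G$, both $\Sext{H}{M}$ and $\Sext{K}{M}$ embed into $\Sext{G}{M}$, so that $q * p$, $q' * p$, and all the products appearing in the statement are computed inside the semigroup $\Sext{G}{M}$. In particular, $p *_1 q'$ lies in $\Sext{K}{M}$ by the definition of the action $*_1$, and hence $q * (p *_1 q') * p$ is a well-formed product in $\Sext{G}{M}$.

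Next, using associativity I would rewrite
$$(q * p) * (q' * p) = q * (p * q' * p).$$
The middle factor $p * q' * p$ is exactly of the form handled by Lemma \ref{LemmaTypeP}: taking the invariant type $p$ in the role of $p'$ and $q'$ in the role of $q$, and recalling that $p$ is $H(M)$-invariant by our standing hypothesis, the lemma yields $p * q' * p = (p *_1 q') * p$. Substituting this back and regrouping once more by associativity gives
$$(q * p) * (q' * p) = q * (p *_1 q') * p,$$
as required.

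The computation is short, and I do not expect any genuine obstacle beyond bookkeeping. The only points that require care are confirming that the invariance hypothesis of Lemma \ref{LemmaTypeP} is precisely the $H(M)$-invariance of $p$ that we have fixed for the whole section, and that $p *_1 q'$ is again a type in $K$ so that the right-hand side parses as a product in $\Sext{G}{M}$. Note also that the argument nowhere uses $q, q' \in I$ specifically; it is valid for arbitrary $q, q' \in \Sext{K}{M}$, and the restriction to $I$ in the statement merely reflects that we are describing the operation internal to the minimal flow $I * p$.
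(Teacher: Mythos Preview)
Your proposal is correct and matches the paper's intended argument: the paper simply states that the Fact follows from Lemma \ref{LemmaTypeP}, and your use of associativity together with one application of that lemma (with $p' = p$) is exactly the computation being invoked. Your remark that the identity holds for arbitrary $q, q' \in \Sext{K}{M}$, not merely for $q, q' \in I$, is also correct.
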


\begin{lemma}
	\label{LemmaSGForm}
	There is an ideal subgroup of $I * p$ of the form $J * p$ where $J = q_0 * (p *_1 \EGen{K}{M})$ for some $q_0 \in \EGen{K}{M}$. Moreover, $q_0$ can be chosen such that $q_0 * p$ is an idempotent.
\end{lemma}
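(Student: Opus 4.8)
The plan is to realize the required ideal subgroup inside the ``good'' part $\EGen{K}{M} * p$ of the minimal flow and then rewrite it via the action $*_1$. First I would apply Proposition \ref{PropSameSG}, which tells us that $\EGen{K}{M} * p$ is a disjoint union of ideal subgroups of $I * p$. This set is nonempty, and each ideal subgroup is a group, hence contains its identity, which is an idempotent. So we may fix an idempotent $u \in \EGen{K}{M} * p$ and write $u = q_0 * p$ with $q_0 \in \EGen{K}{M}$. This already delivers the ``moreover'' clause, namely that $q_0 * p$ is idempotent, and the ideal subgroup we will work with is the one determined by $u$, i.e. $u * (I * p)$.

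The key observation is that this ideal subgroup lies entirely inside $\EGen{K}{M} * p$. Indeed, in the decomposition of $I * p$ into its ideal subgroups, $u$ belongs to exactly one of them, namely $u * (I * p)$; since $\EGen{K}{M} * p$ is a \emph{union} of ideal subgroups and contains $u$, the whole of $u * (I * p)$ is one of the summands and so $u * (I * p) \subseteq \EGen{K}{M} * p$. I would then compute the effect of $u$ on $\EGen{K}{M} * p$ using the identity $(q * p) * (q' * p) = q * (p *_1 q') * p$ from the preceding Fact:
$$u * (\EGen{K}{M} * p) = \{ (q_0 * p) * (q' * p) : q' \in \EGen{K}{M} \} = q_0 * (p *_1 \EGen{K}{M}) * p = J * p,$$
where $J = q_0 * (p *_1 \EGen{K}{M})$. (That $J \subseteq \EGen{K}{M}$, so that $J$ genuinely consists of generics, follows from Lemma \ref{LemmaSameIdealSG} applied to $q_0 \in \EGen{K}{M}$ and $p *_1 q' \in \Sext{K}{M}$.)

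It then remains to identify $u * (I * p)$ with $J * p$. The inclusion $J * p = u * (\EGen{K}{M} * p) \subseteq u * (I * p)$ is immediate from $\EGen{K}{M} * p \subseteq I * p$. For the reverse, take any $x \in u * (I * p)$; by the key observation $x \in \EGen{K}{M} * p$, so $x = q' * p$ for some $q' \in \EGen{K}{M}$, and since $u$ is the identity of the group $u * (I * p)$ we get $x = u * x = u * (q' * p) \in u * (\EGen{K}{M} * p) = J * p$. Hence $u * (I * p) = J * p$ is an ideal subgroup of the required form. The one step that really needs care is the key observation that the ideal subgroup through $u$ is swallowed by $\EGen{K}{M} * p$; this is precisely what Proposition \ref{PropSameSG} provides, and once it is in hand the remainder is bookkeeping with the semigroup identities for $*$ and $*_1$ together with the idempotency of $u$.
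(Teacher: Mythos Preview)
Your proof is correct and follows essentially the same strategy as the paper: use Proposition~\ref{PropSameSG} to locate an idempotent $q_0*p$ inside $\EGen{K}{M}*p$, then sandwich the associated ideal subgroup between $(q_0*p)*(\EGen{K}{M}*p)$ and $(q_0*p)*(I*p)$, rewriting the former via the identity $(q*p)*(q'*p)=q*(p*_1 q')*p$. The paper organizes the argument slightly differently (it begins with an arbitrary ideal subgroup $J*p\subset\EGen{K}{M}*p$ and then selects the idempotent $q_0*p$ inside it), but the content is the same.
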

\begin{proof}
	Take any $J \subset \EGen{K}{M}$ such that $J * p$ is an ideal subgroup. By Proposition \ref{PropSameSG} we may in fact assume that $J \subset u * \EGen{K}{M}$ for some idempotent $u \in \EGen{K}{M}$. Let $q_0 \in u * \EGen{K}{M}$ such that $q_0 * p \in J * p$ is an idempotent. Then $J * p = q_0 * p * J * p$. We have
	$$q_0 * p * J * p \subset q_0 * p * \EGen{K}{M} * p = q_0 * (p * _1 \EGen{K}{M}) * p.$$
	On the other hand,
	$$J * p = q_0 * p * I * p \supset q_0 * p * \EGen{K}{M} * p.$$
	Thus $J$ has the desired form.
\end{proof}

\begin{remark}
	Note that by Lemma \ref{LemmaSameIdealSG}, the elements of $J$ are generic and by Proposition \ref{PropSameSG} belong to the same ideal subgroup of $\EGen{K}{M}$.
\end{remark}

Now fix an ideal subgroup of $I * p$ the form $J * p$ as in Lemma \ref{LemmaSGForm} with an idempotent $q_0 * p$. Recall that the quotient map $\pi: K \rightarrow K^{00}$ is a group epimorphism. This epimorphism extends to the semigroup epimorphism $\pi : \Sext{K}{M} \rightarrow K/K^{00}$ and by Theorem \ref{TheoremNIPAmenable}, the restriction of $\pi$ to any ideal subgroup of $\EGen{K}{M}$ is a group isomorphism. For $q \in \EGen{K}{M}$, write $q/K^{00}$ to denote $\pi(q)$. Consequently:
\begin{lemma}
	\label{LemmaBijection}
	Let $i : J * p \rightarrow K/K^{00}$ be defined as follows:
	$$i(q*p) = q/K^{00}.$$
	Then $i$ is injective.
\end{lemma}
We now show that the group operation in $J * p$ can be described in terms of group operation in $K/K^{00}$. Define $\pi_K : G \rightarrow K$ as follows: $\pi_K(g) = k$ iff $g = kh$ for some $h \in H$. This function naturally lifts to $\pi_K : \Sext{G}{M} \rightarrow \Sext{K}{M}$. For any $q \in \Sext{K}{M}, q' \in \Sext{H}{M}$ we have $\pi_K(q * q') = q$.

With this notation, the map from Lemma \ref{LemmaBijection} is of the form $\pi \circ \pi_K.$

\begin{lemma}
	\label{LemmaShift}
	Let $k_0 = q_0/K^{00} \in K/K^{00}$. Then for any $q \in J$, $(p *_1 q)/K^{00} = k^{-1}_0 \cdot q/K^{00}.$
\end{lemma}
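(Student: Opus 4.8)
The plan is to exploit the single structural fact that $q_0 * p$ is the \emph{identity} of the ideal subgroup $J * p$, and then to peel off the trailing copy of $p$ via $\pi_K$ before pushing the resulting identity in $\Sext{K}{M}$ forward to $K/K^{00}$ along the homomorphism $\pi$. Fix $q \in J$. Since $J \subset \EGen{K}{M} \subset I$ (the inclusion $\EGen{K}{M} \subset I$ being Proposition \ref{PropDecompFlows2}), both $q$ and $q_0$ lie in $I$, so the displayed Fact preceding the statement applies to them. Because $q_0 * p$ is the idempotent identity of the group $J * p$, left multiplication by it fixes $q * p$, giving $(q_0 * p) * (q * p) = q * p$; rewriting the left-hand side by the Fact turns this into $q_0 * (p *_1 q) * p = q * p$.

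Next I would strip the trailing $p$. The element $q_0 * (p *_1 q)$ is a product of types in $\Sext{K}{M}$ (here $p *_1 q \in \Sext{K}{M}$ because $*_1$ sends $\Sext{H}{M} \times \Sext{K}{M}$ into $\Sext{K}{M}$), hence it lies in $\Sext{K}{M}$. Applying $\pi_K$ to both sides of $q_0 * (p *_1 q) * p = q * p$ and using the identity $\pi_K(r * p) = r$ for $r \in \Sext{K}{M}$, $p \in \Sext{H}{M}$, I obtain $q_0 * (p *_1 q) = q$ as elements of $\Sext{K}{M}$. Finally, applying the semigroup homomorphism $\pi : \Sext{K}{M} \to K/K^{00}$ and recalling $k_0 = q_0/K^{00}$, this becomes $k_0 \cdot (p *_1 q)/K^{00} = q/K^{00}$, and solving for the middle factor yields $(p *_1 q)/K^{00} = k_0^{-1} \cdot q/K^{00}$, as claimed.

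Everything here is routine once the relation $(q_0 * p) * (q * p) = q * p$ is in hand, so I do not expect a genuine obstacle; the substance of the lemma is entirely contained in that one observation. The only points requiring care are bookkeeping. First, one must check $q_0, q \in I$ so that the Fact is legitimately applicable. Second, one must use that $q_0 * (p *_1 q)$ is a \emph{pure} $K$-type, so that $\pi_K$ removes exactly the last factor $p$ and returns $q_0 * (p *_1 q)$ — the paper only provides $\pi_K(q * q') = q$ for $q \in \Sext{K}{M}$, $q' \in \Sext{H}{M}$, and $\pi_K$ is not a homomorphism in general, so this must be invoked in exactly the stated form. Third, one must use that $\pi$ is multiplicative on all of $\Sext{K}{M}$, not merely a bijection on individual ideal subgroups, in order to split $\pi(q_0 * (p *_1 q)) = \pi(q_0) \cdot \pi(p *_1 q)$.
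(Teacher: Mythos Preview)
Your proof is correct and follows essentially the same route as the paper: both use that $q_0 * p$ is the identity of $J*p$ to obtain $q*p = q_0*(p *_1 q)*p$, then project to $K/K^{00}$ and use multiplicativity of $\pi$. The only cosmetic difference is that you explicitly strip the trailing $p$ via $\pi_K$ to get the equality $q_0 * (p *_1 q) = q$ in $\Sext{K}{M}$ before applying $\pi$, whereas the paper passes directly to $K/K^{00}$ via the composite $i = \pi \circ \pi_K$.
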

\begin{proof}
	We have $q * p = q_0 * p * q * p = q_0 * (p *_1 q) * p$. Thus $q/K^{00} = (q_0 * (p *_1 q))/K^{00} = q_0/K^{00} \cdot (p *_1 q)/K^{00}$.
\end{proof}
Fix $k_0 = q_0/K^{00}$. We show the following:
\begin{proposition}
	Let $K' = i[J * p] \subset K/K^{00}$ be equipped with the operation $a \circ b = a \cdot k^{-1} \cdot b$ with $\cdot$ denoting the group operation in $K/K^{00}$. Then $(K',\circ)$ is a group isomorphic with $J * p$ via $i$.
\end{proposition}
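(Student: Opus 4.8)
The plan is to show that the injection $i$ from Lemma \ref{LemmaBijection} is in fact a group isomorphism once the target is given the twisted operation $\circ$, and then to deduce that $(K',\circ)$ is a group simply by transporting the group structure of the ideal subgroup $J*p$ across the bijection $i$. Recall that $i$ is the restriction to $J*p$ of the composite $\pi\circ\pi_K$, where $\pi_K$ recovers the $K$-coordinate (so that $\pi_K(q*p)=q$ whenever $q\in\EGen{K}{M}$) and $\pi$ is the semigroup epimorphism $\Sext{K}{M}\to K/K^{00}$; in particular every element of $J*p$ is uniquely of the form $q*p$ with $q\in J$, so $i$ is well defined and, by Lemma \ref{LemmaBijection}, a bijection onto $K'$.

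The heart of the argument is the computation of $i$ on a product. First I would invoke the preceding Fact describing the operation in $I*p$, namely $(q*p)*(q'*p)=q*(p*_1 q')*p$ for $q,q'\in I$ (which applies since $J\subset\EGen{K}{M}\subset I$). Applying $i=\pi\circ\pi_K$, using $\pi_K\bigl(q*(p*_1 q')*p\bigr)=q*(p*_1 q')$ together with the fact that $\pi$ is a homomorphism, I get
$$i\bigl((q*p)*(q'*p)\bigr)=\bigl(q*(p*_1 q')\bigr)/K^{00}=q/K^{00}\cdot (p*_1 q')/K^{00}.$$
Since $q'*p\in J*p$ forces $q'\in J$, Lemma \ref{LemmaShift} applies and yields $(p*_1 q')/K^{00}=k_0^{-1}\cdot q'/K^{00}$. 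Substituting,
$$i\bigl((q*p)*(q'*p)\bigr)=q/K^{00}\cdot k_0^{-1}\cdot q'/K^{00}=i(q*p)\cdot k_0^{-1}\cdot i(q'*p)=i(q*p)\circ i(q'*p),$$
which is precisely the homomorphism identity for $i\colon (J*p,*)\to(K',\circ)$.

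It then remains to conclude that $(K',\circ)$ is a group. Since $i$ is a bijection of $J*p$ onto $K'$ satisfying $i(x*y)=i(x)\circ i(y)$, the operation $\circ$ is closed on $K'$ (every product $i(x)\circ i(y)=i(x*y)$ lies in $K'$), and as $i$ is surjective the identity $i(x)\circ i(y)=i(x*y)$ determines $\circ$ on all of $K'\times K'$. Hence $\circ$ transports the group operation of the ideal subgroup $J*p$ to $K'$, so $(K',\circ)$ is a group and $i$ is an isomorphism onto it. I expect the only genuinely delicate point to be the bookkeeping already packaged into Lemma \ref{LemmaShift}: the twist by $k_0^{-1}$ arises exactly because $q_0*p$, rather than $p$ itself, is the idempotent of $J*p$, and it is this twist that prevents $K'$ from being an honest subgroup of $K/K^{00}$ and forces the nonstandard operation $\circ$. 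Everything else is a routine transport-of-structure argument.
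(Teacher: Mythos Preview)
Your proof is correct and follows essentially the same line as the paper's: both reduce to showing that $i$ is a homomorphism via the computation $i\bigl((q*p)*(q'*p)\bigr)=(q*(p*_1 q'))/K^{00}=q/K^{00}\cdot k_0^{-1}\cdot q'/K^{00}$ using the Fact on the semigroup operation together with Lemma~\ref{LemmaShift}. Your added transport-of-structure remark making explicit why $(K',\circ)$ is then a group is a welcome clarification the paper leaves implicit.
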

\begin{proof}
	It suffices to show that $i$ is a homomorphism. Let $q, q' \in J$. Then
	$$i(q * p * q' * p) = i(q * (p *_1 q') * p) = (q * (p *_1 q'))/K^{00} = q/K^{00} \cdot (p *_1 q')/K^{00}.$$
	Now by Lemma \ref{LemmaShift},
	$$q/K^{00} \cdot (p *_1 q')/K^{00} = q/K^{00} \cdot k^{-1} \cdot q'/K^{00} = i(q * p) \cdot k^{-1} \cdot i(q' * p).$$
\end{proof}
Finnaly, note that $(K',\circ)$ is isomorphic to the group $(K'k^{-1},\cdot)$ by sending $a$ to $ak^{-1}$. Thus we prove
\begin{theorem}
	\label{TheoremEllis}
	Let $G = KH$ be a good decomposition. Then the Ellis group of the universal definable flow $\Sext{G}{M}$ is isomorphic to a subgroup of $K/K^{00}$.
\end{theorem}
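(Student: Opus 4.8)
The plan is to read off the Ellis group directly from the concrete ideal subgroup $J * p$ constructed above, and then to recognize it, after one affine change of coordinates, as an honest subgroup of $K/K^{00}$. Recall that the Ellis group of $\Sext{G}{M}$ is by definition the isomorphism class of an ideal subgroup of a minimal ideal of the Ellis semigroup, which here is $\Sext{G}{M}$ itself. By Propositions \ref{PropDecompFlows1} and \ref{PropDecompFlows2} the set $I * p$ is such a minimal ideal, and by Lemma \ref{LemmaSGForm} the set $J * p$, with $J = q_0 * (p *_1 \EGen{K}{M})$ and $q_0 * p$ idempotent, is one of its ideal subgroups. So it suffices to identify the isomorphism type of $(J * p, *)$.

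First I would invoke the preceding proposition, which already transports $(J * p, *)$ onto the set $K' = i[J * p] \subset K/K^{00}$ equipped with the twisted operation $a \circ b = a \cdot k_0^{-1} \cdot b$, via the injection $i = \pi \circ \pi_K$ sending $q * p \mapsto q/K^{00}$ (its group-homomorphism property being exactly the content computed from Lemmas \ref{LemmaTypeP} and \ref{LemmaShift}). Thus $(J * p, *) \cong (K', \circ)$, and the only remaining task is to untwist $\circ$ into the ambient operation of $K/K^{00}$.

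For this I would use right translation by $k_0^{-1}$: define $\psi : (K', \circ) \to K/K^{00}$ by $\psi(a) = a k_0^{-1}$. A one-line check gives $\psi(a \circ b) = a \cdot k_0^{-1} \cdot b \cdot k_0^{-1} = \psi(a) \cdot \psi(b)$, so $\psi$ is an injective homomorphism for the honest operation of $K/K^{00}$. Hence its image $(K' k_0^{-1}, \cdot)$ is a subgroup of $K/K^{00}$ isomorphic to $(K', \circ)$, and the composite $\psi \circ i$ exhibits $J * p$ as isomorphic to this subgroup. That $K' k_0^{-1}$ contains the identity is automatic: since $k_0 = q_0/K^{00} = i(q_0 * p) \in K'$ is the unit of $(K', \circ)$ (indeed $k_0 \circ a = k_0 \cdot k_0^{-1} \cdot a = a$), it maps to $1 \in K/K^{00}$.

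I do not expect a genuine obstacle at this stage, because all of the substantive work is already done upstream: locating the minimal ideal, pinning down the ideal subgroup $J * p$, proving $i$ is a well-defined injection, and computing the multiplication in the twisted coordinates. The theorem is essentially a packaging statement, and the one point worth isolating cleanly is that the \emph{affine} group $(K', \circ)$ becomes a genuine subgroup only after the translation $\psi$ — and that this translation is legitimate precisely because the chosen idempotent $q_0 * p$ supplies the basepoint $k_0$ that plays the role of the unit of $\circ$.
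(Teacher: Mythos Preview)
Your proposal is correct and follows essentially the same route as the paper: after assembling the ideal subgroup $J*p$ via Lemma~\ref{LemmaSGForm}, transporting it by $i$ onto $(K',\circ)$ via Lemma~\ref{LemmaBijection} and the preceding proposition, the paper finishes with exactly your untwisting map $a\mapsto a k_0^{-1}$ to identify $(K',\circ)$ with the subgroup $(K'k_0^{-1},\cdot)$ of $K/K^{00}$. Your write-up simply makes the final affine change of coordinates and the role of the idempotent $q_0*p$ a bit more explicit than the paper does.
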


We now discuss how much our results depend on the chosen model $M$. We managed to describe the Ellis group as the object of the form $q_0 * (p *_1 \EGen{K}{M}) * p$ and showed how it maps to a subgroup of $K/K^{00}$. Particularly, this group depends on the $K^{00}$ cosets to which the operation $q \mapsto p *_1 q$ sends generic types in $K$.

Let $M \prec^* N$ and let $p|N$ be the unique heir. It is a type in $H$ in the language $\mathcal{L}_{ext,M}$ over $N$. By Proposition \ref{PropExtInvariant} there is some $p' \in \Sext{H}{N}$ extending $p|N$.

\begin{question}
	Let $q$ be a global generic in $K$. Is it true that $p *_1 q^M$ and $p' * q^N$ belong to the same coset of $K^{00}$?
\end{question}
Observe that if the above is true for all global generic types $q$, then the Ellis group does not depend on the model, that is $p *_1 \EGen{K}{M}$ intersects the same cosets of $K^{00}$ as $p' *_1 \EGen{K}{N}$ and $q^M * p$ is an idempotent if and only if $q^N * p'$ is. We do not know the answer to that question, or whether the negative answer implies that the Ellis group depends on the model. Here we list some examples of situations where the answer is positive:
\begin{itemize}
	\item[--] Whenever the action of $H$ on $K$ is trivial, $p *_1 q$ is simply $q$ for any type. This is the case precisely when $K$ normalizes $H$, that is when $G = K \ltimes H$.
	\item[--] If $G$ is definable over an o-minimal expansion of the field of reals. In this case, over any extension $R$, $(p' *_1 q^R)/K^{00}$ depends only on $q/K^{00}$. See \cite{YL} for more details.
	\item[--] If every global generic in $K$ is definable over a small model, then each $q^N$ is the unique external generic extending the unique heir of $q^M$. In this case, the $K^{00}$ coset of $p' * q^N$ is a definable property of the pair $(p,q)$.
	\item[--] If $K$ is stable and stably embedded in $G$, then every global generic in $K$ is definable and the preceding case holds.
\end{itemize}

We have the following specific result when the action of $p'$ on $\Sext{K}{N}$ respects the cosets of $K^{00}$, as is the case for groups definable in an o-minimal expansion of the reals.
\begin{proposition}
	\label{PropPreserveCoset}
	Assume that $(p' *_1 q^N)/K^{00}$ depends only on $q^N/K^{00} (= q/K^{00})$. Then the Ellis group of the universal definable flow of $G$ over $N$ contains $(N_G(H) \cap K)/K^{00}$ as a subgroup.
\end{proposition}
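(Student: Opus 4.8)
The plan is to read the claim off the explicit model of the Ellis group built in Section~4, combined with the observation that every element of $N_G(H)\cap K$ is a fixed point of the action $\cdot_1$. I first recall the output of that construction, now carried out over $N$ with the $H(N)$-invariant type $p'$ in place of $p$. By Theorem~\ref{TheoremEllis} and the discussion preceding it, the Ellis group is isomorphic, as a subgroup of $K/K^{00}$ under the ordinary operation, to $K'k_0^{-1}$, where $k_0=q_0/K^{00}$ and $K'=i[J*p']$. Writing $\sigma\colon K/K^{00}\to K/K^{00}$ for the self-map $\sigma(q/K^{00})=(p'*_1 q)/K^{00}$, which is well defined precisely by the standing hypothesis, and using that $\pi$ sends $\EGen{K}{N}$ onto all of $K/K^{00}$ (each ideal subgroup projecting isomorphically onto it by Theorem~\ref{TheoremNIPAmenable}), one computes $K'=k_0\cdot\sigma[K/K^{00}]$. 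Hence the Ellis group is isomorphic to $k_0\cdot\sigma[K/K^{00}]\cdot k_0^{-1}$.

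Let $A$ denote the image of $N_G(H)\cap K$ under $\pi$; this is the subgroup of $K/K^{00}$ that the statement abbreviates by $(N_G(H)\cap K)/K^{00}$. The whole argument then reduces to showing that $\sigma$ fixes $A$ pointwise. Granting that, $A\subseteq\sigma[K/K^{00}]$, so $k_0A\subseteq K'$ and therefore $k_0Ak_0^{-1}\subseteq K'k_0^{-1}$; as $k_0Ak_0^{-1}$ is a subgroup of $K/K^{00}$ isomorphic to $A$ via conjugation by $k_0$, it is a subgroup isomorphic to $A$ lying inside the copy $K'k_0^{-1}$ of the Ellis group, which is exactly the assertion.

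To see that $\sigma$ fixes $A$, fix $c\in N_G(H)\cap K$. Since $c$ normalizes $H$, for every $h\in H$ we have $hc=c(c^{-1}hc)$ with $c^{-1}hc\in H$, so by the definition of the action $h\cdot_1 c=c$; thus $c$ is a fixed point of every map $h\cdot_1(-)$, and consequently $p'*_1\tp(c)=\tp(c)$. Reading the hypothesis ``$(p'*_1 q)/K^{00}$ depends only on $q/K^{00}$'' as the assertion that $\cdot_1$ descends to a genuine action on the compact group $K/K^{00}$ (which is how it is established in the o-minimal-over-$\R$ setting, see \cite{YL}), I evaluate $\sigma$ on the representative $\tp(c)$ and get $\sigma(cK^{00})=(p'*_1\tp(c))/K^{00}=\tp(c)/K^{00}=cK^{00}$, as needed.

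The main obstacle is precisely this evaluation: the hypothesis is literally stated for generic $q$, whereas $\tp(c)$ is principal. To remain within the stated hypothesis I would instead take a generic representative $q=c\cdot q_0$ with $q_0\in\EGen{K}{N}$ and $q_0\vdash K^{00}$, and compute, from $hc=c(c^{-1}hc)$, that $h\cdot_1(c k_0)=c\,(h''\cdot_1 k_0)$ with $h''=c^{-1}hc$. Projecting, this yields $\sigma(cK^{00})=cK^{00}\cdot(p''*_1 q_0)/K^{00}$, where $p''=c^{-1}p'c$ is again an $H(N)$-invariant type; so the problem collapses to showing that $\cdot_1$ fixes the identity coset, i.e. that $(p''*_1 q_0)/K^{00}$ is trivial. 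Equivalently, one must show that $h\cdot_1(-)=\pi_K(h\,\cdot\,)$ carries $K^{00}$ into $K^{00}$ for $h$ realizing an $H$-invariant type. This is immediate once coset-respecting of $\cdot_1$ is available for the conjugate $p''$ and not merely for $p'$, and it is exactly the substantive input that the o-minimal context provides but that does not formally follow from the single-type hypothesis as phrased; isolating and verifying this invariance with respect to $K^{00}$ is where I expect the real work to be.
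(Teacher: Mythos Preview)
Your first two paragraphs are correct and coincide with the paper's argument, only spelled out in more detail. The paper's proof is the single observation that $k\in N_G(H)\cap K$ is fixed by $\cdot_1$, hence by the hypothesis any external $q$ with $q/K^{00}=kK^{00}$ satisfies $(p'*_1 q)/K^{00}=kK^{00}$, in particular a generic one; the passage from ``$p'*_1\EGen{K}{N}$ meets $kK^{00}$'' to the conclusion about the Ellis group is left implicit via the earlier description $J=q_0*(p'*_1\EGen{K}{N})$, which you make explicit with your computation $K'=k_0\cdot\sigma[K/K^{00}]$ and the conjugation by $k_0$.

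Regarding the obstacle you raise: the paper reads the hypothesis exactly as you do in your second paragraph, namely as applying to all external types, so that one may evaluate $\sigma$ on the principal $\tp(k)$ and transfer the result to a generic in the same coset. This is the intended reading---the paper's proof literally says ``for any external type $q$''---and it is how the hypothesis is verified in the o-minimal-over-$\R$ case. Your final paragraph's alternative, which tries to stay strictly within generics at the cost of passing to the conjugate $p''=c^{-1}p'c$, is therefore unnecessary, and the difficulty you isolate there need not be resolved for the proposition as stated.
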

\begin{proof}
	If $k \in N_G(H) \cap K$, then $H$ fixes $k$ under the action $\cdot_1$. Thus for any external type $q$ such that $q/K^{00} = k/K^{00}$ we have $(p' *_1 q)/K^{00} = q/K^{00}$. In particular, this holds for a generic $q$. This shows that the set $p' *_1 \EGen{K}{N}$ meets $k/K^{00}$.
\end{proof}

\section{The o-minimal case}
In this section we apply the previous results to the case of groups definable in an o-minimal expansion of a real closed field. We refer the reader to \cite{PS} for classic results on such groups, including the definition of definable compactness. Here, we use the established results to prove Theorem \ref{TheoremOMin} and consider classes of groups where we can prove that the Ellis group does not depend on the model.

Let $R = (R, +, \cdot, <, 0, 1, \ldots)$ be an o-minimal expansion of a real closed field. Let $G$ be an $R$-definable group. We recall the following result from \cite{HPP}:
\begin{proposition}
	If $G$ is definably compact, then it is $fsg$.
\end{proposition}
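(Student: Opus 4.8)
The plan is to produce a single global type in $G$ whose every left translate is finitely satisfiable in a fixed small model, which is exactly the definition of $fsg$. I would fix a small model $M_0$ over which $G$ is defined, pass to $\GC$, and reduce the problem to the existence of a \emph{generic} global type, i.e. a type $p\in S_G(\GC)$ every formula of which defines a generic subset of $G$ (finitely many translates cover $G$). This reduction has two halves. Genericity of a definable set is visibly invariant under left translation, so if $p$ is generic then so is every translate $g\cdot p$; thus it suffices to show that a generic global type exists and that genericity implies finite satisfiability in $M_0$.

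To obtain a generic type, the key structural input is that for a definably compact $G$ the non-generic definable subsets form a \emph{proper ideal} in the Boolean algebra of definable subsets of $G$; equivalently, $G$ cannot be covered by two non-generic sets. Granting this, the quotient Boolean algebra is non-trivial, any ultrafilter on it lifts to a complete global type all of whose members are generic, and this is the desired generic type. I would then show a generic type is finitely satisfiable in $M_0$: by o-minimal cell decomposition and uniform finiteness, ``being generic'' is a first-order property of the defining parameter with a uniform bound on the number of covering translates, and genericity forces the set to be large in the o-minimal dimension sense, so that by density of $G(M_0)$ in the relevant cells it must already contain a point of $G(M_0)$. Combining the two halves yields a generic type whose every translate is generic and hence finitely satisfiable in $M_0$, witnessing $fsg$.

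The main obstacle is the ideal property of the non-generic sets, which is precisely where definable compactness is indispensable: for non-definably-compact groups such as $(R,+)$ the sets $(0,\infty)$ and $(-\infty,0]$ are both non-generic yet cover the group, so the statement fails in general. Establishing it requires the o-minimal machinery of definable curve-completeness (a definably compact group has no definable unbounded one-parameter pieces, so every definable path has a limit in $G$) together with dimension-theoretic control of how the group operation moves definable sets around. This is the content of the Peterzil--Pillay analysis of generic sets in definably compact groups, which I would invoke as the heart of the argument rather than reprove from scratch.
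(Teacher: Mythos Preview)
The paper does not prove this proposition at all: it simply recalls it as a result of Hrushovski--Peterzil--Pillay \cite{HPP}. So there is no proof in the paper to compare against, only the citation. Your outline is essentially an attempt to reconstruct the HPP argument, and the overall architecture---produce a global generic type, then show all of its translates are finitely satisfiable in a fixed small model---is the right one. You also correctly identify the crucial o-minimal input, namely that in a definably compact group the non-generic definable sets form a proper ideal; this is exactly the Peterzil--Pillay fact you cite, and it is indeed where definable compactness enters.

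The gap is in your finite-satisfiability step. You argue that a generic set has full o-minimal dimension and that ``density of $G(M_0)$ in the relevant cells'' then forces an $M_0$-point. Full dimension follows, but the second implication fails: a definable set of full dimension need not meet $G(M_0)$. Already in the circle group $S^1$ over a proper elementary extension of $\mathbb{R}$, an arc of infinitesimal length has dimension $1$ yet contains no standard point. Of course that particular arc is \emph{not} generic, but the example shows your proposed mechanism (dimension $\Rightarrow$ $M_0$-points) cannot be what is doing the work; something specific to genericity, not merely dimension, is required. In HPP the passage from ``generic'' to ``realized in every small model'' goes through the left-invariant Keisler measure obtained by pushing forward to $G/G^{00}$ (a compact Lie group) via the standard-part map and pulling back Haar measure: a generic set has positive measure, and positive-measure sets meet every small model by construction. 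Your sketch has no substitute for this measure-theoretic or standard-part input, so as written the finite-satisfiability half does not go through.
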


Another result comes from \cite{CP}:
\begin{proposition}
	If $G$ is torsion-free, then it is definably extremely amenable.
\end{proposition}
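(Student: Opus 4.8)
The plan is to produce a global $G$-invariant type in $S_G(\GC)$: recall that a group in our setting is definably extremely amenable exactly when it carries such a type, since this is precisely what corresponds to a $\{0,1\}$-valued invariant Keisler measure. First I would reduce to the definably connected case: since a torsion-free group definable in an $o$-minimal structure is definably connected (a standard consequence of the theory of the $o$-minimal Euler characteristic, together with the observation that a nontrivial finite group carries no translation-invariant type), I may assume $G = G^0$. The rest of the argument then runs by induction along a normal series.

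Next I would invoke the structure theory of torsion-free $o$-minimal groups. A definably connected torsion-free group definable in an $o$-minimal expansion of a real closed field is solvable, and, being solvable and definably connected, it admits a definable normal series $\{1\} = G_0 \lhd G_1 \lhd \cdots \lhd G_n = G$ in which each successive quotient $G_{i+1}/G_i$ is one-dimensional, definably connected and torsion-free, hence definably isomorphic to $(R,+)$ (see \cite{PS} and the references therein). I would treat this as a black box. The induction then constructs a global invariant type $\lambda_i \in S_{G_i}(\GC)$ at each stage. For the base case, $(R,+)$ carries the translation-invariant global type $\mu$ concentrating on the cut above all of $\GC$, that is $\{x > r : r \in \GC\}$, which is complete by $o$-minimality and invariant because $x > r$ is preserved under adding any fixed element.

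For the inductive step, suppose $\lambda_i$ is a global $G_i$-invariant type, and set $Q = G_{i+1}/G_i \cong (R,+)$ with quotient map $\pi$ and $+\infty$ type $\mu$ on $Q$. Using definable choice in the $o$-minimal structure, fix a definable section $\sigma : Q \to G_{i+1}$ of $\pi$. I define $\lambda_{i+1}$ to be $\tp(\sigma(\bar t)\cdot c / \GC)$, where $\bar t \models \mu$ and $c$ realizes the heir of $\lambda_i$ over $\GC\,\sigma(\bar t)$; the heir conventions make this a well-defined global type on $G_{i+1}$. To verify $G_{i+1}$-invariance, take $g \in G_{i+1}$ and write $g\cdot\sigma(\bar t) = \sigma(\pi(g)+\bar t)\cdot d$ with $d := \sigma(\pi(g)+\bar t)^{-1}\, g\, \sigma(\bar t) \in G_i$. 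Then $g\cdot(\sigma(\bar t)\,c) = \sigma(\pi(g)+\bar t)\cdot(d\,c)$, where $\pi(g)+\bar t \models \mu$ by translation-invariance of $\mu$, and $d\,c$ again realizes the heir of $\lambda_i$ by left $G_i$-invariance of $\lambda_i$ together with the fact that $d$ is parameter-definable over the relevant base. Hence $g\cdot\sigma(\bar t)\,c$ realizes $\lambda_{i+1}$, and $\lambda_n = \lambda$ is the sought global $G$-invariant type.

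The deep external input is the structure theorem; the genuinely delicate point in the part I control is the inductive step, and specifically the cocycle term $d \in G_i$ measuring the failure of $\sigma$ to be a homomorphism. The construction works precisely because this term lands in $G_i$ and is therefore absorbed by the left-invariance of $\lambda_i$, while $g$ acts on $Q$ by a pure translation that the $+\infty$ type $\mu$ ignores. It is worth noting that, since all the twisting is pushed into $G_i$, the argument never needs the action of $G_{i+1}$ on the quotient to be orientation-preserving, which is what keeps the induction uniform. The remaining work is routine bookkeeping with heirs to confirm that each $\lambda_{i+1}$ is complete and independent of the incidental choices, for which one uses that the types $\mu$ and $\lambda_i$ actually in play are definable and invariant.
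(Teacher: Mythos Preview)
The paper does not prove this proposition; it merely cites it from \cite{CP}. Your argument is essentially correct and is in fact the standard route taken in the literature: invoke the structure theory of torsion-free o-minimal groups (solvable, with a definable subnormal series whose successive quotients are copies of $(R,+)$), then climb the series, using the $+\infty$ type on each quotient together with a definable section to build a global $G$-invariant type. The inductive step is handled properly: the cocycle $d$ lies in $G_i$ and is absorbed by the $G_i$-invariance of the heir of $\lambda_i$, exactly as in Lemma~\ref{LemmaInvHeir}.

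Two minor points are worth tightening. First, your parenthetical justification for $G=G^0$ is muddled: the remark that a nontrivial finite group carries no translation-invariant type is irrelevant to establishing definable connectedness (it would only show, circularly, that a definably extremely amenable group has no nontrivial finite quotient). Definable connectedness of torsion-free o-minimal groups is a structural fact from the Euler-characteristic theory you allude to, or it already comes packaged with the decomposition theorem you invoke; just cite it directly. Second, for the heir machinery to run you need each $\lambda_i$ to be \emph{definable}, not merely invariant. This holds because $\mu$ is definable and $\lambda_{i+1}$ is the pushforward of the definable type $\mu\otimes\lambda_i$ along the definable map $(t,c)\mapsto\sigma(t)\cdot c$; you gesture at this in your final paragraph, but it deserves to be stated as part of the induction hypothesis rather than deferred to ``routine bookkeeping''.
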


We now turn to the structural theory developed by Conversano in \cite{Con1} and \cite{Con2} to describe an arbitrary group $G$.
\begin{definition}
	We say that $G$ has a definable compact-torsion-free decomposition if there are definable subgroups $K,H < G$ with $K \cap H = \{1_G\}$ such that $K$ is definably compact, and $H$ is torsion-free.
\end{definition}
Clearly a definable compact-torsion-free decomposition is a good decomposition in the sense of Definition \ref{DefinitionGood}.

Not every group has a definable compact-torsion-free decomposition. However, for any definably connected group $G$ it is possible to find a canonical quotient that admits a decomposition. After \cite{Con1}:
\begin{proposition}
	\label{PropDecomp}
	Let $G$ be a definable definably connected group. Then there exists a definable torsion-free $\mathcal{A}(G) \lhd G$ such that $G/\mathcal{A}(G)$ has a definable compact-torsion-free decomposition, and it is the maximal definable quotient of $G$ with a definable compact-torsion-free decomposition.
\end{proposition}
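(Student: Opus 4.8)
The plan is to produce $\mathcal{A}(G)$ as the smallest definable normal subgroup one is forced to kill in order to obtain a compact-torsion-free decomposition, and then to recognize that this smallest subgroup is automatically torsion-free. Concretely, set
$$\mathcal{N} = \{N \lhd G : N \text{ definable and } G/N \text{ admits a definable compact-torsion-free decomposition}\}.$$
The statement then splits into three tasks: $\mathcal{N}$ has a least element $\mathcal{A}(G)$ (this gives at once that $G/\mathcal{A}(G)$ decomposes and is the maximal such quotient), and $\mathcal{A}(G)$ is torsion-free. Throughout I would rely on the structure theory of groups definable in o-minimal structures: the descending chain condition on definable subgroups, the existence and definability of the solvable radical $R(G)$ with $G/R(G)$ semisimple, the dichotomy that a nontrivial definably connected definably compact group always contains torsion while a definably connected torsion-free group is solvable with all definable homomorphic images again torsion-free, and the theory of maximal definably compact subgroups and their conjugacy.

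For the existence of a least element I would first note that $\mathcal{N}$ is nonempty, since $G \in \mathcal{N}$ (the trivial quotient decomposes vacuously). The key technical step is to show that $\mathcal{N}$ is closed under finite intersection: given $N_1, N_2 \in \mathcal{N}$, one must produce a compact-torsion-free decomposition of $G/(N_1 \cap N_2)$ from those of $G/N_1$ and $G/N_2$. Once closure under intersection is in hand, the descending chain condition on definable subgroups forces the downward-directed family $\mathcal{N}$ to contain a minimal, hence least, element, which I define to be $\mathcal{A}(G)$.

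To see that $\mathcal{A}(G)$ is torsion-free I would analyze the solvable radical. Writing $R = R(G)$, I would isolate inside $R$ a maximal definable torsion-free subgroup normal in $G$, show that the quotient of $R$ by it is definably compact (a definable torus), and handle the semisimple quotient $G/R$ separately via the splitting of its definably simple factors into those of compact and of noncompact type. Assembling these, one obtains a concrete element of $\mathcal{N}$ that is torsion-free; comparing it with the least element $\mathcal{A}(G)$ and using that definable homomorphic images of torsion-free definably connected groups remain torsion-free, one identifies $\mathcal{A}(G)$ with a torsion-free subgroup. The compact-torsion-free decomposition of $G/\mathcal{A}(G)$ is then a Zappa-Sz\'ep splitting $K H$ in which $K$ is a maximal definably compact subgroup and $H$ a definable torsion-free complement meeting $K$ trivially.

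The hard part will be the two structural inputs, and especially the definability and canonicity of the splitting. Closure of $\mathcal{N}$ under intersection is delicate because a compact-torsion-free decomposition is a genuine group-theoretic splitting rather than a mere topological product, so the decompositions of $G/N_1$ and $G/N_2$ do not obviously amalgamate; I expect this to require the fine structure of definably connected solvable groups together with the conjugacy theory of maximal definably compact subgroups. Controlling how torsion is distributed between the solvable radical and the semisimple quotient, and arranging a definable torsion-free complement $H$ to the maximal definably compact $K$, is the delicate point where the bulk of the o-minimal structure theory enters, and it is here that I anticipate the main obstacle.
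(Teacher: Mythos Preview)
The paper does not prove this proposition: it is quoted verbatim from Conversano's structure theory \cite{Con1} and used as a black box. So there is no ``paper's own proof'' to compare your proposal against; any proof you write is necessarily a reconstruction of results from \cite{Con1} and \cite{Con2}.

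On the merits of your outline: the overall architecture is sensible, but the step you flag as delicate---closure of $\mathcal{N}$ under finite intersections---is not just delicate, it is essentially the whole theorem, and you have not given a workable strategy for it. Knowing that $G/N_1$ and $G/N_2$ each split as (definably compact)$\cdot$(torsion-free) gives you an embedding of $G/(N_1\cap N_2)$ into the product $G/N_1 \times G/N_2$, which itself splits; but a definable subgroup of a group with a compact--torsion-free decomposition need not inherit one, so this route does not close. In practice Conversano does not argue abstractly via a least element of $\mathcal{N}$: she builds $\mathcal{A}(G)$ explicitly from the structure theory (the maximal normal definable torsion-free subgroup of the solvable radical, together with the analysis of the semisimple quotient into compact and noncompact definably simple factors), and proves directly that the resulting quotient admits the decomposition and is maximal with that property. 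Your third paragraph already sketches exactly this construction; the cleaner plan is to make \emph{that} the definition of $\mathcal{A}(G)$ and prove maximality directly, rather than to detour through an intersection-closure lemma you do not yet know how to establish.
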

Since torsion-free groups are definably extremely amenable, this is the case we considered in Section \ref{SectionDEA}.

\begin{corollary}
	Theorem \ref{TheoremOMin} is true.
\end{corollary}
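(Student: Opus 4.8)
The plan is to combine the structural decomposition of Proposition \ref{PropDecomp} with the Ellis group computations of Sections \ref{SectionDEA} and 4, and then invoke the solution of Pillay's conjecture to recognize the resulting target group as a compact Lie group.

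First, given a definably connected $G$ definable in $R$, I would apply Proposition \ref{PropDecomp} to produce the definable torsion-free normal subgroup $\mathcal{A}(G) \lhd G$ for which the quotient $Q = G/\mathcal{A}(G)$ carries a definable compact-torsion-free decomposition. Being torsion-free, $\mathcal{A}(G)$ is definably extremely amenable, so it admits a global invariant type in $S_{\mathcal{A}(G)}(\GC)$; since this type concentrates on $\mathcal{A}(G)$, viewing it inside $S_G(\GC)$ and restricting to $M$ yields an $\mathcal{A}(G)$-invariant type in $S_G(M)$ of exactly the kind demanded by the setup of Section \ref{SectionDEA} over the fixed model $M$. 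Proposition \ref{PropDEAIsom} then identifies the Ellis group of $\Sext{G}{M}$ with the Ellis group of $\Sext{Q}{M}$, reducing the problem to a group carrying a good decomposition.

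Next, the decomposition $Q = KH$ is good in the sense of Definition \ref{DefinitionGood}: the factor $K$ is definably compact, hence $fsg$, while $H$ is torsion-free, hence definably extremely amenable and therefore again furnishes the $H(M)$-invariant type required to run the arguments of Section 4 over $M$. Theorem \ref{TheoremEllis} then exhibits the Ellis group of $\Sext{Q}{M}$ as abstractly isomorphic to a subgroup of $K/K^{00}$.

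Finally, since $K$ is definably compact in an o-minimal expansion of a real closed field, its quotient $K/K^{00}$ is, under the logic topology, a compact Lie group by \cite{HPP}. Chaining the two identifications, the Ellis group of $\Sext{G}{M}$ is abstractly isomorphic to a subgroup of the compact Lie group $K/K^{00}$, which is the assertion of Theorem \ref{TheoremOMin}. I expect the only real work to be verification rather than invention: one must confirm that the definable extreme amenability of each torsion-free piece genuinely supplies the relevant invariant type over the arbitrary fixed $M$, so that the constructions of Sections \ref{SectionDEA} and 4 are available for every $M$ and not merely for saturated ones; and one must apply correctly the fact that definable compactness of $K$ upgrades the compact group $K/K^{00}$ to a Lie group. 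The latter is the single external input not developed in the preceding sections, and is where the substantive content lies.
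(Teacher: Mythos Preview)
Your proposal is correct and follows essentially the same route as the paper: apply Proposition \ref{PropDecomp} to reduce to $Q = G/\mathcal{A}(G)$ via Proposition \ref{PropDEAIsom}, recognize $Q = KH$ as a good decomposition so that Theorem \ref{TheoremEllis} embeds the Ellis group into $K/K^{00}$, and finish by invoking Pillay's conjecture to see $K/K^{00}$ is a compact Lie group. The extra care you take to justify the existence of the required invariant types over an arbitrary model $M$ is implicit in the paper's proof but is a fair point to spell out.
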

\begin{proof}
	Let $\mathcal{A}(G) \lhd G$ be as in Proposition \ref{PropDecomp} and let $G/\mathcal{A}(G) = KH$ be a definable compact-torsion-free decomposition. By Proposition \ref{PropDEAIsom} the Ellis group of $\Sext{G}{R}$ is isomorphic to the Ellis group of $\Sext{G/\mathcal{A}(G)}{R}$. Since $G/\mathcal{A}(G) = KH$ is a good decomposition, by Theorem \ref{TheoremEllis} the Ellis group is isomorphic to a subgroup of $K/K^{00}$. By the classic theorem of Pillay, $K/K^{00}$ with logic topology is is a compact Lie group.
\end{proof}
If $G/\mathcal{A}(G)$ is definably isomorphic to a group definable over the reals, this result can be improved. We have the following result by Yao \cite{YL}:
\begin{proposition}
	Let $G$ be definable over the reals with a compact-torsion-free decomposition $G = KH$. Then over any model the Ellis group of its universal definable flow is isomorphic to $N_G(H) \cap K(\mathbb{R})$.
\end{proposition}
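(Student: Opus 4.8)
The plan is to read off the Ellis group from Theorem~\ref{TheoremEllis} and then to compute the resulting subgroup of $K/K^{00}$ explicitly, using that over an o-minimal expansion of $\R$ everything can be transported to the compact Lie group $K(\R)$ and that the action $\cdot_1$ descends modulo $K^{00}$.

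First I would set up the transport. Since $K$ is definably compact and defined over $\R$, by Pillay's theorem $K/K^{00}$ is a compact Lie group, and since $K$ is defined over $\R$ the standard part map identifies it with $K(\R)$, giving an isomorphism $K/K^{00} \xrightarrow{\sim} K(\R)$. Under this identification Theorem~\ref{TheoremEllis}, via Lemma~\ref{LemmaSGForm}, presents the Ellis group over a model $M$ as the image of the set $q_0 * (p *_1 \EGen{K}{M}) * p$, i.e.\ (up to the shift by $q_0/K^{00}$ built into that theorem) as the image in $K(\R)$ of the map $q \mapsto p *_1 q$ on generics. The decisive structural input, recorded in the discussion after Theorem~\ref{TheoremEllis} and due to Yao, is that in this setting $(p *_1 q)/K^{00}$ depends only on $q/K^{00}$; hence $q \mapsto p *_1 q$ descends to a well-defined self-map $\bar\alpha \colon K(\R) \to K(\R)$. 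Because $\bar\alpha$ takes values in $K(\R)$ and $N_G(H)\cap K(\R)$ makes no reference to the model, the same computation runs over $\R$ and over any elementary (or $*$-elementary) extension, which will yield the ``over any model'' clause.

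Next I would describe $\bar\alpha$ concretely and reduce the statement to a fixed-point computation. For $k \in K(\R)$ and $h \models p$ the element $h \cdot_1 k$ is the $K$-component of $hk$, and $\bar\alpha(k) = \st(h\cdot_1 k)$ is the limit of $h_t \cdot_1 k$ as $h_t$ runs to infinity along the invariant type $p$; thus $\bar\alpha$ records the action of $H$ ``at infinity''. Since $\EGen{K}{M}$ surjects onto $K/K^{00}$ (Proposition~\ref{PropGenericUnique} together with Theorem~\ref{TheoremNIPAmenable}), the Ellis group is, as a subgroup of $K(\R)$, a conjugate of $\im(\bar\alpha)$, so it suffices to show $\im(\bar\alpha) = N_G(H) \cap K(\R)$. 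I would choose the invariant type $p$ to be idempotent, so that $\bar\alpha \circ \bar\alpha = \bar\alpha$ and therefore $\im(\bar\alpha) = \mathrm{Fix}(\bar\alpha)$; the target then becomes $\mathrm{Fix}(\bar\alpha) = N_G(H)\cap K(\R)$.

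Finally I would prove the two inclusions. The inclusion $N_G(H)\cap K(\R) \subseteq \mathrm{Fix}(\bar\alpha)$ is exactly Proposition~\ref{PropPreserveCoset}: if $k$ normalizes $H$ then $hk = k(k^{-1}hk)$ with $k^{-1}hk \in H$, so $h\cdot_1 k = k$ for every $h$ and hence $\bar\alpha(k)=k$. The reverse inclusion is the heart of the matter and the step I expect to be the main obstacle: I must show that every limiting value $\bar\alpha(k)$ normalizes $H$, i.e.\ that $\st(h\cdot_1 c) = c$ for $h\models p$ forces $h_0\cdot_1 c = c$ for all standard $h_0 \in H(\R)$. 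Here one must genuinely use that a torsion-free group definable over an o-minimal expansion of $\R$ is solvable and contractible, so that $p$ selects a single coherent direction to infinity in $H(\R)$ along which the orbit map $h \mapsto h\cdot_1 k$ converges; the limit point is then stationary under the residual action, and idempotency of $p$ upgrades this to stationarity under all of $H(\R)$, which is precisely the normalizer condition. As a sanity check, for $G=SL(2,\R)=SO(2)\cdot AN$ this collapses the circle $SO(2)$ onto $N_G(AN)\cap SO(2) = \{\pm I\}$, recovering the order-two Ellis group of \cite{GPP}. Making the convergence-and-stationarity argument precise via the o-minimal structure theory of torsion-free groups is the one place where more than the abstract machinery of the previous sections is required.
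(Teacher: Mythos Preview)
The paper does not prove this proposition; it is quoted from Yao--Long \cite{YL} without argument, so there is no proof in the paper to compare against. I assess your outline on its own merits.

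Your reduction via Theorem~\ref{TheoremEllis} and Lemma~\ref{LemmaSGForm} to the image of $\bar\alpha$ in $K/K^{00}\cong K(\R)$ is correct, as is the inclusion $N_G(H)\cap K(\R)\subseteq\im(\bar\alpha)$ via Proposition~\ref{PropPreserveCoset}. Note, however, that the ``decisive structural input'' you invoke---that $(p*_1 q)/K^{00}$ depends only on $q/K^{00}$---is itself attributed in this paper to \cite{YL}, so you are at least partially relying on the very source whose theorem you are reproving; you would need to verify that this descent statement is established in \cite{YL} independently of the proposition.

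The real gap is the reverse inclusion $\im(\bar\alpha)\subseteq N_G(H)\cap K(\R)$, which you leave as a hand-wave about convergence along $p$ and the structure theory of torsion-free groups. That route is unnecessary and, as written, not a proof. The argument is much more direct and uses nothing beyond invariance of $p$: for any $h_0\in H(M)$ and any $r\in \Sext{K}{M}$ one has
\[ h_0\cdot_1(p*_1 r)=(\tp(h_0/M)*p)*_1 r=p*_1 r, \]
simply because $p$ is $H(M)$-invariant. Hence every type in the image of $p*_1(-)$ is literally fixed by the $\cdot_1$-action of $H(M)$, and so its class in $K/K^{00}\cong K(\R)$ is fixed by $H(\R)$. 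But $c\in K(\R)$ is $H(\R)$-fixed under $\cdot_1$ exactly when $c^{-1}H(\R)c\subseteq H(\R)$, and since torsion-free definable groups are definably connected, a dimension comparison gives $c^{-1}Hc=H$, i.e.\ $c\in N_G(H)$. This closes the inclusion without any appeal to solvability, contractibility, or limits at infinity. Your detour through $\mathrm{Fix}(\bar\alpha)$ and idempotency of $p$ (which is automatic for an $H$-invariant type, not something to be ``chosen'') is harmless but superfluous once one sees this.
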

Notice that over the reals, $N_G(H) \cap K(\mathbb{R})$ can be identified with the group $(N_G(H) \cap K)/K^{00}$. Compare this result with Proposition \ref{PropPreserveCoset}.

\section{Generalizations}
In this section we discuss some possible generalizations of our results. The motivation to consider the case of groups admitting a good decomposition comes from the compact-torsion-free decomposition theorem that allowed us to consider groups definable in o-minimal setting. Definable extreme amenability and $fsg$ are the abstract model-theoretic properties of the corresponding torsion-free and definably compact groups that allowed us to solve these cases. Generally, when trying to describe definable topological dynamics of a group $G$ decomposed into $G = KH$ using the induced action of $H$ on $K$ (in a way described in Section 4), we are forced to work with both left and right translations of types (and their coheirs) in $K$, but only left translations of types (and their heirs) in $H$. Assumption of $fsg$ for $K$ guarantees the existence of two-sided minimal ideals (over any model) that are all finitely satisfiable extensions of their restrictions. Assumption of definable extreme amenability of $H$ can likely be weakened.

The $fsg$ property can be viewed as a model-theoretic abstraction of compactness for definable groups, and is related to the notion of compact domination. Consider that compact-torsion-free decomposition in the o-minimal setting is the model-theoretic analog of Iwasawa decomposition for semisimple Lie groups. The classic theorem states that a semisimple real Lie group $G$ decomposes as $G = KAN$ with $K$ compact, $A$ abelian and $N$ nilpotent such that $H = AN$ is also a group. Variants of Iwasawa decomposition exist in other contexts. For instance, for a local non-Archimedean field $F$ with discrete valuation, the group $GL_n(F)$ can be decomposed into $GL_n(\mathcal{O}_F)U_n(F)$, where $\mathcal{O}_F$ is the ring of integers of $F$ and $U_n$ the group of upper-triangular matrices. In this case, the group $GL_n(\mathcal{O}_F)$ is a maximal compact subgroup of $GL_n(F)$. The field $\mathbb{Q}_p$ of p-adic numbers has been studied model-theoretically. Particularly, the theory of $\mathbb{Q}_p$ in the so-called Macintyre language has been shown to have many nice properties ($NIP$, elimination of quantifiers, definable choice). A notion of definable compactness for definable sets exists, and definably compact groups are shown to have $fsg$.

Consider the group $GL_n(\mathbb{Q}_p)$ with the decomposition $GL_n(\mathbb{Z}_p)U_n(\mathbb{Q}_p)$. The group $GL_n(\mathbb{Z}_p)$ is $fsg$, and while in this case the intersection of $GL_n(\mathbb{Z}_p)$ and $U_n(\mathbb{Q}_p)$ is not trivial, it is also $fsg$, giving hope it can be easily dealt with. The group $U_n(\mathbb{Q}_p)$ is not definably extremely amenable. It is however definably amenable. Attempting to generalize our result to the case where $G = KH$ for $K$ $fsg$ and $H$ definably amenable (with possibly nontrivial intersection) to tackle the $p$-adic setting may require additional assumptions. In the general case of definably amenable groups, the role of almost periodic types is assumed by so-called $f$-generic types. Since, the dynamical analysis of $\Sext{G}{M}$ involves translations of heirs of types in $H$, a reasonable assumption on $H$ might be that it contains an $f$-generic external type such that for any $N \succ M$ with $|N| = |M|$, $p|N$ is also $f$-generic. This is the case for the group $U_n(\mathbb{Q}_p)$.

\small{

}

\end{document}